\DeclareMathOperator*{\grad}{grad}
\newcommand{\stiefel}{\mathrm{St}}
\newcommand{\real}{\mathbb{R}}
\newcommand{\nat}{\mathbb{N}}
\newcommand{\norm}[2]{\left\lVert{#1}\right\rVert_{#2}}
\newcommand{\ip}[3]{\left\langle{#1,#2}\right\rangle_{#3}}
\newcommand{\expect}[2]{\mathbb{E}_{#1}\left[{#2}\right]}
\newcommand{\relmiddle}[1]{\mathrel{}\middle#1\mathrel{}}
\newtheorem{definition}{Definition}[section]
\newtheorem{theorem}{Theorem}[section]
\newtheorem{lemma}{Lemma}[section]
\newtheorem{corollary}{Corollary}[section]
\newtheorem{assumption}{Assumption}[section]
\newtheorem{problem}{Problem}[section]
\begin{document}
\title{Riemannian Adaptive Optimization Algorithm and \\
 Its Application to Natural Language Processing}

\author{Hiroyuki Sakai, Hideaki Iiduka\thanks{This work was supported by JSPS KAKENHI Grant Number JP18K11184.}}

\maketitle

\abstract
This paper proposes a Riemannian adaptive optimization algorithm to optimize the parameters of deep neural networks.
The algorithm is an extension of both AMSGrad in Euclidean space and RAMSGrad on a Riemannian manifold.
The algorithm helps to resolve two issues affecting RAMSGrad. The first is that it can solve the Riemannian stochastic optimization problem directly,
in contrast to RAMSGrad which only achieves a low regret.
The other is that it can use constant learning rates, which makes it implementable in practice.
Additionally, we apply the proposed algorithm to Poincar{\'e} embeddings that embed the transitive closure of the WordNet nouns into the Poincar{\'e} ball model of hyperbolic space.
Numerical experiments show that regardless of the initial value of the learning rate, our algorithm stably converges to the optimal solution and converges faster than the existing algorithms.

\section{Introduction} 
Riemannian optimization has attracted a great deal of attention \cite{iosifidis2015graph, mao2016novel, shen2018deep} in light of developments in machine learning and deep learning. This paper focuses on Riemannian adaptive optimization algorithms for solving an optimization problem on a Riemannian manifold. In the field of machine learning, there is an important example of the Riemannian optimization problem. Nickel and Kiela \cite{nickel2017poincare} proposed Poincar{\'e} embeddings, which embed hierarchical representations of symbolic data (e.g., text, graph data) into the Poincar{\'e} ball model of hyperbolic space. In fact, experiments on transitive closure of the WordNet noun hierarchy showed that embeddings into a 5-dimensional Poincar{\'e} ball are better than embeddings into a 200-dimensional Euclidean space. Since the Poincar{\'e} ball has a Riemannian manifold structure, the problem of finding Poincar{\'e} embeddings should be considered to be a Riemannian optimization problem.

Bonnabel \cite{bonnabel2013stochastic} proposed Riemannian stochastic gradient descent (RSGD), the most basic Riemannian stochastic optimization algorithm. RSGD is a simple algorithm, but its slow convergence is problematic. In \cite{sato2019riemannian}, Sato, Kasai, and Mishra proposed the Riemannian stochastic variance reduced gradient (RSVRG) algorithm and gave a convergence analysis under some natural assumptions. RSVRG converges to an optimal solution faster than RSGD; however, RSVRG needs to calculate the full gradient every few iterations. In Euclidean space, adaptive optimization algorithms, such as AdaGrad \cite{duchi2011adaptive}, Adam \cite[Algorithm 1]{kingma2015adam}, Adadelta \cite{zeiler2012adadelta}, and AMSGrad \cite[Algorithm 2]{reddi2018convergence}, \cite[Algorithm 1]{iiduka2020appropriate}, are widely used for training deep neural networks. However, these adaptive algorithms cannot be naturally extended to general Riemannian manifolds, due to the absence of a canonical coordinate system. Therefore, special measures are required to extend the adaptive algorithms to Riemannian manifolds. For instance, Kasai, Jawanpuria, and Mishra \cite{kasai2019riemannian} proposed adaptive stochastic gradient algorithms on Riemannian matrix manifolds by adapting the row, and column subspaces of gradients.

In the particular case of a product of Riemannian manifolds, B{\'e}cigneul and Ganea \cite{becigneul2018riemannian} proposed Riemannian AMSGrad (RAMSGrad) by regarding each component of the product Riemannian manifold as a coordinate component in Euclidean space. However, their convergence analysis had two points requiring improvement. First, they only performed a regret minimization (Theorem \ref{thm:old}) and did not solve the Riemannian optimization problem. Second, they did a convergence analysis with only a diminishing learning rate; i.e., they did not perform a convergence analysis with a constant learning rate. Since diminishing learning rates are approximately zero after a large number of iterations, algorithms that use them are not implementable in practice. In contrast, a constant learning rate does not cause this problem.

The motivation of this work is to identify whether or not RAMSGrad can be modified in such a way that it can be applied to Riemannian optimization from the viewpoints of both theory and practice. The theoretical motivation is to show that the modified RAMSGrad can solve directly the Riemannian optimization problem, while the practical motivation is to show that it can be applied to important problems in natural language processing and principal component analysis. 

Motivated by the above discussion, we propose modified RAMSGrad (Algorithm \ref{RAMSGrad}), which is an extension of RAMSGrad, to solve the Riemannian optimization problem (Problem \ref{pbl:main}). In addition, we give a convergence analysis (Theorem \ref{thm:main}) valid for both a constant learning rate (Corollary \ref{cor:CLR}) and diminishing learning rate (Corollary \ref{cor:DLR}). The analysis leads to the finding that the proposed algorithm can solve directly the Riemannian optimization problem. In particular, we emphasize that the proposed algorithm can use a constant learning rate to solve the problem (Corollary \ref{cor:CLR}), in contrast to the previous result \cite{becigneul2018riemannian} in which RAMSGrad with a diminishing learning rate only performed a regret minimization (see Subsection \ref{subsec:3.2} for comparisons of the proposed algorithm with RAMSGrad). In numerical experiments, we apply the proposed algorithm to Poincar{\'e} embeddings and compare it with RSGD, Riemannian AdaGrad (RAdaGrad) \cite[Section 3.2]{becigneul2018riemannian}, and Riemannian Adam (RAdam) \cite[Section 4]{becigneul2018riemannian} (Section \ref{sec:NE}). We show that it converges to the optimal solution faster than the existing algorithms and that it minimizes the objective function regardless of the initial learning rate. In particular, we show that the proposed algorithm with a constant learning rate is a good way of embedding the WordNet mammals subtree into a Poincar{\'e} ball. We also applied the algorithm to principal component analysis and found that the choice between using a constant or a diminishing learning rate depends on the dataset. These numerical comparisons lead to the finding that the proposed algorithm is good for solving important problems in natural language processing and principal component analysis.

This paper is organized as follows. Section \ref{sec:MathP} gives the mathematical preliminaries and states the main problem. Section \ref{sec:ALROA} describes the modified RAMSGrad and gives its convergence analysis. Section \ref{sec:NE} numerically compares the performance of the proposed learning algorithms with the existing algorithms. Section \ref{sec:CaFW} concludes the paper with a brief summary.

\section{Mathematical Preliminaries}
\label{sec:MathP}
\subsection{Definitions, assumptions, and main problem}
\label{subsec:2.1}
Let $M$ be a Riemannian manifold. An exponential map at $x \in M$, written as $\exp_{x} \colon T_xM \rightarrow M$, is a mapping from the tangent space $T_xM$ to $M$ with the requirement that a vector $\xi \in T_xM$ is mapped to the point $y := \exp_x(\xi) \in M$ such that there exists a geodesic $\gamma \colon [0,1] \rightarrow M$, which satisfies $\gamma(0)=x$, $\gamma(1)=y$, and $\dot{\gamma}(0)=\xi$, where $\dot{\gamma}$ is the derivative of $\gamma$ (see \cite{absil2008, zhang2016first}). Moreover, $\log_x \colon M \rightarrow T_xM$ denotes a logarithmic map at a point $x \in M$, which is defined as the inverse mapping of the exponential map at $x \in M$. For all $x,y \in M$, the existence of $\log_x (y)$ is guaranteed \cite[Chapter V, Theorem 4.1]{sakai1996riemannian} \cite[Proposition 2.1]{chong2010iterative}.

Next, we give the definitions of a geodesically convex set and function (see \cite[Section 2]{zhang2016first}) that generalize the concepts of a convex set and function in Euclidean space.
\begin{definition}
[Geodesically convex set] Let $X$ be a subset of a Riemannian manifold $M$. $X$ is said to be geodesically convex if, for any two points in $X$, there is a unique minimizing geodesic within $X$ which joins those two points.
\end{definition}
\begin{definition}
[Geodesically convex function] A smooth function $f \colon M \rightarrow \real$ is said to be geodesically convex if, for any $x,y \in M$, it holds that
\begin{align*}
f(y) \geq f(x) + \ip{\grad{f(x)}}{\log_x(y)}{x},
\end{align*}
where $\ip{\cdot}{\cdot}{x}$ is the Riemannian metric on $M$, and $\grad f(x)$ is the Riemannian gradient of $f$ at a point $x \in M$ (see \cite{absil2008}).
\end{definition}
For $i \in \{1,2, \cdots ,N\}$, let $M_i$ be a Riemannian manifold and $M$ be the Cartesian product of $n$ Riemannian manifolds $M_i$ (i.e., $M:=M_1 \times \cdots \times M_N$). $x^i \in M_i$ denotes a corresponding component of $x \in M$, and $\ip{\cdot}{\cdot}{x^i}$ denotes a Riemannian metric at a point $x^i \in M_i$. Furthermore, $\norm{\cdot}{x^i}$ represents the norm determined from the Riemann metric $\ip{\cdot}{\cdot}{x^i}$. For a geodesically convex set $X_i \subset M_i$, we define the projection operator as $\Pi_{X_i}:M_i \rightarrow X_i$; i.e., $\Pi_{X_i}(x^i)$ is the unique point $y^i \in X_i$ minimizing $d^i(x^i,\cdot)$, where $d^i(\cdot ,\cdot):M_i \times M_i \rightarrow \real$ denotes the distance function of $M_i$. The tangent space at a point $x = (x^1,x^2, \cdots ,x^N) \in M$ is given by $T_xM = T_{x^1}M_1 \oplus \cdots \oplus T_{x^N}M_N$, by considering $T_{x^i}M_i$ to be a subspace of $T_xM$, where $\oplus$ is the direct sum of vector spaces. Then, for a point $x = (x^1,x^2, \cdots ,x^N) \in M$ and a tangent vector $\xi \in T_xM$, we write $\xi =(\xi^i)=(\xi^1,\xi^2, \cdots ,\xi^N)$, where $i \in \{1,2 \cdots ,N\}$, and $\xi^i \in T_{x^i}M_i$. Finally, for $x^i,y^i \in M_i$, $\varphi_{x^i \rightarrow y^i}^i$ denotes an isometry from $T_{x^i}M_i$ to $T_{y^i}M_i$ (e.g., $\varphi^i_{x^i \rightarrow y^i}$ stands for parallel transport from $T_{x^i}M_i$ to $T_{y^i}M_i$).

$\expect{}{X}$ denotes the expectation of a random variable $X$, and $t_{[n]}$ denotes the history of the process up to time $n$ (i.e., $t_{[n]}:=(t_1,t_2, \cdots ,t_n)$). $\mathbb{E}[X | t_{[n]} ]$ denotes the conditional expectation of $X$ given $t_{[n]}$.

\begin{assumption}~\label{asm:main}
For $i \in \{1,2, \cdots ,N\}$, let $M_i$ be a complete simply connected Riemannian manifold with sectional curvature lower bounded by $\kappa_i \leq 0$. We define $M:=M_1 \times \cdots \times M_N$. Then, we assume
\begin{description}
\item[(A1)] For all $i \in \{1,2, \cdots ,N\}$, let $X_i \subset M_i$ be a bounded, closed, geodesically convex set \footnote{The closedness and geodesical convexity of $X_i$ imply the uniqueness and existence of $\Pi_{X_i}(x^i)$ \cite[Proposition 2.4]{chong2010iterative} \cite[Theorem 1]{walter1974metric}.} and $X := X_1 \times \cdots \times X_N$. In addition, $X_i \subset M_i$ has a diameter bounded by $D$; i.e., there exists a positive real number $D$ such that
\begin{align*}
\max_{i\in\{1,2, \cdots ,N\}}\sup\{d^i(x^i,y^i):x^i,y^i \in X_i\} \leq D,
\end{align*}
where $d^i(\cdot ,\cdot)$ denotes the distance function of $M_i$;
\item[(A2)] A smooth function $f_t:M \rightarrow \real$ is geodesically convex, where $t$ is a random variable whose probability distribution is a uniform distribution and supported on a set $\mathcal{T} := \{1, 2, \cdots ,T\}$. The function $f$ is defined for all $x \in M$, by $f(x):=\expect{}{f_t(x)}=(1/T)\sum_{t=1}^Tf_t(x)$.
\end{description}
\end{assumption}

Note that, when we define a positive number $G$ as
\begin{align*}
G:=\sup_{t \in \mathcal{T},x \in X}\norm{\grad f_t(x)}{x},
\end{align*}
we find that $G < \infty$ from Assumption \ref{asm:main} (A1). The following is the main problem considered here \cite[Section 4]{becigneul2018riemannian}:
\begin{problem}~\label{pbl:main}
Suppose that Assumption \ref{asm:main} holds. Then, we have
\begin{align*}
x_\ast \in X_\ast := \left\{ x_\ast \in X : f(x_\ast) = \inf_{x \in X}f(x) \right\}.
\end{align*}
\end{problem}

\subsection{Background and motivation}
\label{subsec:2.2}
Euclidean adaptive optimization algorithms, such as AdaGrad, Adam, and AMSGrad, are powerful tools for training deep neural networks. However, there are optimization problems on Riemannian manifolds in machine learning \cite{nickel2017poincare} that cannot be solved by Euclidean adaptive optimization algorithms. Accordingly, useful algorithms, such as RSGD \cite[Section 2]{bonnabel2013stochastic}, RAdaGrad \cite[Section 3.2]{becigneul2018riemannian}, RAdam \cite[Section 4]{becigneul2018riemannian}, and RAMSGrad \cite[Figure 1(a)]{becigneul2018riemannian}, have been developed to solve Riemannian optimization problems. The algorithms, such as RAdaGrad, RAdam, and RAMSGrad, are based on the Euclidean adaptive optimization algorithms, AdaGrad, Adam, and AMSGrad. Hence, we can expect that the corresponding Riemannian adaptive optimization algorithms perform better than RSGD, the most basic Riemannian stochastic optimization algorithm. In fact, the numerical comparisons in \cite{becigneul2018riemannian} showed that Riemannian adaptive optimization algorithms are superior for the task of embedding the WordNet taxonomy in the Poincar\'e ball.

Although Riemannian adaptive optimization algorithms have been shown to be useful for Riemannian optimization in machine learning, we have two motivations related to the previous results in \cite{becigneul2018riemannian}. The first motivation is to identify whether or not RAMSGrad can solve directly the Riemannian optimization problem. This is because the previous results in \cite{becigneul2018riemannian} only showed that RAMSGrad performs a regret minimization, which does not lead to Riemannian minimization (see Subsection \ref{subsec:3.2} for details). The second motivation is to identify whether or not RAMSGrad is applicable to significant problems in fields such as natural language processing and principal component analysis. This is because the previous results in \cite{becigneul2018riemannian} only gave a convergence analysis of RAMSGrad with a diminishing learning rate. Diminishing learning rates are approximately zero after a large number of iterations, which implies that algorithms with them are not implementable in practice. In contrast, a constant learning rate does not cause this problem (see Subsection \ref{subsec:3.2} for details).

Therefore, our goal is to devise an algorithm that overcomes the above issues. In particular, the following section proposes an adaptive optimization algorithm (Algorithm \ref{RAMSGrad}) with a constant learning rate that can solve Problem \ref{pbl:main} directly (Corollary \ref{cor:CLR}).

\section{Riemannian Adaptive Optimization Algorithm}
\label{sec:ALROA}
\subsection{Proposed algorithm and its convergence analysis}
\label{subsec:3.1}
We propose the following algorithm (Algorithm \ref{RAMSGrad}). A small constant $\epsilon > 0$ in the definition of $\hat{v}_n^i$ guarantees that $\sqrt{\hat{v}_n^i} > 0$ (Adam \cite[Algorithm 1]{kingma2015adam} and AMSGrad \cite[Algorithm 2]{reddi2018convergence}, \cite[Algorithm 1]{iiduka2020appropriate} use such a constant in practice).

\begin{algorithm}
\caption{Modified RAMSGrad for solving Problem \ref{pbl:main} \label{RAMSGrad}}
\begin{algorithmic}[1]
\REQUIRE $(\alpha_n)_{n \in \nat} \subset [0,1),(\beta_{1n})_{n \in \nat} \subset [0,1),\beta_2 \in [0,1),\epsilon>0$
\STATE $n \leftarrow 1, x_1 \in X,\tau_0=m_0=0 \in T_{x_0}M,v_0^i, \hat{v}_0^i=0 \in \real$
\LOOP
\STATE $g_{t_n}=(g_{t_n}^i)=\grad{f_{t_n}(x_n)}$
\FOR{$i = 1, 2, \cdots ,N$}
\STATE $m_n^i=\beta_{1n}\tau_{n-1}^i+(1-\beta_{1n})g_{t_n}^i$
\STATE $v_n^i=\beta_2v_{n-1}^i+(1-\beta_2)\norm{g_{t_n}^i}{x_n^i}^2$
\STATE $\hat{v}_n^i=\max\{\hat{v}_{n-1}^i,v_n^i\} + \epsilon$
\STATE $x_{n+1}^i=\Pi_{X_i}\left[\exp_{x_n^i}^i\left(-\alpha_n\dfrac{m_n^i}{\sqrt{\hat{v}_n^i}}\right)\right]$
\STATE $\tau_n^i=\varphi_{x_n^i \rightarrow x_{n+1}^i}^i(m_n^i)$
\ENDFOR
\STATE $n \leftarrow n+1$
\ENDLOOP
\end{algorithmic}
\end{algorithm}

Now, let us compare Algorithm \ref{RAMSGrad} on a Riemannian manifold with AMSGrad in Euclidean space. For simplicity, let us suppose that $M_i = X_i = \mathbb{R}$ ($i=1,2,\ldots,N$). Then, Algorithm \ref{RAMSGrad} defined on $M = \mathbb{R}^N$ is as follows: given $\bm{x}_1 \in \mathbb{R}^N$ and $\bm{m}_0 = \bm{v}_0 = \hat{\bm{v}}_0 = \bm{0} \in \mathbb{R}^N$,
\begin{align*}
&\bm{m}_n = \beta_{1n} \bm{m}_{n-1} + (1 - \beta_{1n})\bm{g}_{t_n},\\
&\bm{v}_n = \beta_2 \bm{v}_{n-1} + (1 - \beta_2) \bm{g}_{t_n} \odot \bm{g}_{t_n},\\
&\hat{\bm{v}}_i = (\hat{v}_n^i)_i = \left(\max \left\{\hat{v}_{n-1}^i,v_n^i \right\} + \epsilon \right)_i,\\
&\bm{x}_{n+1} = \left( x_n^i - \alpha_n \frac{m_n^i}{\sqrt{\hat{v}_n^i}} \right)_i, 
\end{align*}
where $\bm{x} \odot \bm{x} := ({x^i}^2)_i$ for $\bm{x} = (x^i)_i \in \mathbb{R}^N$. This implies Algorithm \ref{RAMSGrad} is an extension of AMSGrad.

Our convergence analysis (Theorem \ref{thm:main}) allows Algorithm 1 to use both constant and diminishing learning rates. Corollaries \ref{cor:CLR}, and \ref{cor:DLR} are convergence analyses of Algorithm 1 with constant and diminishing learning rates, respectively.

\begin{theorem}~\label{thm:main}
Suppose that Assumption \ref{asm:main} holds. Let $(x_n)_{n \in \nat}$ and $(\hat{v}_n)_{n \in \nat}$ be the sequences generated by Algorithm 1. We assume $\beta_{1n} \leq \beta_{1,n-1}$ for all $n \in \nat$, and $(\alpha_n)_{n \in \nat}$ is a sequence of positive learning rates, which satisfies $\alpha_{n}(1-\beta_{1n}) \leq \alpha_{n-1}(1-\beta_{1,n-1})$ for all $n \in \nat$. We define $G:=\max_{t \in \mathcal{T},x \in X}\norm{\grad f_t(x)}{x}$. Then, for all $x_\ast \in X_\ast$,
\begin{align}
\begin{split}~\label{eq:main}
&\expect{}{\frac{1}{n}\sum_{k=1}^nf(x_k) - f(x_\ast)} \\
&\leq \dfrac{NGD^2}{2(1-\beta_{11})}\frac{1}{n\alpha_n} + \dfrac{G^2}{2\sqrt{\epsilon}(1-\beta_{11})}\sum_{i=1}^N\zeta(\kappa_i,D)\frac{1}{n}\sum_{k=1}^n\alpha_k \\
&\quad + \frac{NGD}{1-\beta_{11}}\frac{1}{n}\sum_{k=1}^n\beta_{1k},
\end{split}
\end{align}
where $\zeta(\kappa_i,D)$ is defined as in Lemma \ref{lem:CiAs}.
\end{theorem}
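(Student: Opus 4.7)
The plan is to transport the Euclidean AMSGrad regret/convergence analysis (as in Reddi et al.) to the product manifold setting, replacing the Euclidean law of cosines by a Toponogov-type comparison inequality valid for sectional curvature bounded below by $\kappa_i\le 0$. The starting point is geodesic convexity from Assumption~\ref{asm:main}(A2): for any $x_\ast \in X_\ast$ and any realized $t_k$, the product structure of $M$ gives
\begin{align*}
f_{t_k}(x_k) - f_{t_k}(x_\ast) \leq - \sum_{i=1}^N \ip{g_{t_k}^i}{\log_{x_k^i}(x_\ast^i)}{x_k^i},
\end{align*}
so it suffices to upper-bound each componentwise inner product by a telescoping distance difference plus controllable error terms.

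To do so, I would apply the non-expansiveness of $\Pi_{X_i}$ (which follows from closedness and geodesic convexity of $X_i$ and the fact that $x_\ast^i \in X_i$) and then invoke Lemma~\ref{lem:CiAs} on the geodesic triangle with vertices $x_k^i$, $\exp_{x_k^i}(-\alpha_k m_k^i/\sqrt{\hat v_k^i})$, and $x_\ast^i$ to obtain an inequality of the form
\begin{align*}
d^i(x_{k+1}^i, x_\ast^i)^2 \leq d^i(x_k^i, x_\ast^i)^2 + 2\alpha_k \ip{m_k^i/\sqrt{\hat v_k^i}}{\log_{x_k^i}(x_\ast^i)}{x_k^i} + \alpha_k^2 \zeta(\kappa_i,D)\,\|m_k^i\|_{x_k^i}^2 / \hat v_k^i.
\end{align*}
Rearranging yields an upper bound on $\ip{m_k^i}{\log_{x_k^i}(x_\ast^i)}{x_k^i}$. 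To recover the gradient $g_{t_k}^i$ from $m_k^i$, I would use the recursion $m_k^i = \beta_{1k}\tau_{k-1}^i + (1-\beta_{1k}) g_{t_k}^i$, solve for $\ip{g_{t_k}^i}{\log_{x_k^i}(x_\ast^i)}{x_k^i}$, and bound the residual $\beta_{1k}\ip{\tau_{k-1}^i}{\log_{x_k^i}(x_\ast^i)}{x_k^i}$ via Cauchy--Schwarz using the isometry property of $\varphi^i$, the gradient bound $G$, and the diameter bound $D$; this produces the third, $NGD\cdot(1/n)\sum_k\beta_{1k}/(1-\beta_{11})$ term of \eqref{eq:main}.

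Next I would sum the resulting inequality over $k=1,\dots,n$ and over $i=1,\dots,N$. The distance terms telescope; the coefficient of $d^i(x_k^i, x_\ast^i)^2$ in the telescope is of the form $\sqrt{\hat v_k^i}/(\alpha_k(1-\beta_{1k}))$, which is non-decreasing in $k$ because (i) the $\max$ in line 7 of Algorithm~\ref{RAMSGrad} forces $\hat v_k^i$ to be monotonically non-decreasing, and (ii) the hypothesis $\alpha_k(1-\beta_{1k})\leq \alpha_{k-1}(1-\beta_{1,k-1})$ controls the denominator. Together with $d^i(x_k^i, x_\ast^i)\leq D$ this yields the initial-condition term $NGD^2/(2(1-\beta_{11}))\cdot 1/(n\alpha_n)$, while bounding $\|m_k^i\|_{x_k^i}^2 \leq G^2$ (by convexity of $\|\cdot\|^2$ applied to the EMA defining $v_k^i$ and the uniform gradient bound) and $\hat v_k^i \geq \epsilon$ converts the quadratic error term into $G^2\zeta(\kappa_i,D)/(2\sqrt{\epsilon}(1-\beta_{11}))\cdot(1/n)\sum_k\alpha_k$, matching the second term. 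Taking total expectation and using the tower property with the independence and uniformity of $t_k$ gives $\expect{}{f_{t_k}(x_k)}=\expect{}{f(x_k)}$, since $x_k$ is $t_{[k-1]}$-measurable, and produces the left-hand side.

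The main obstacle I anticipate is step two, namely obtaining the comparison inequality with the correct scalar coefficient $\zeta(\kappa_i,D)$ in the presence of the projection $\Pi_{X_i}$ and the logarithm $\log_{x_k^i}(x_\ast^i)$ (rather than the post-update point). One must verify that the relevant geodesic triangle lies entirely within a region of diameter at most $D$, so that Lemma~\ref{lem:CiAs} applies with the uniform constant $\zeta(\kappa_i,D)$, and that projecting onto $X_i$ only decreases the distance to $x_\ast^i \in X_i$. Secondary bookkeeping obstacles are verifying the monotonicity of the telescoping coefficients under the stated assumptions on $\alpha_n$ and $\beta_{1n}$, and handling the momentum carryover term so that the $\beta_{1k}$ dependence is linear and not coupled to $\alpha_k$.
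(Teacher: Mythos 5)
Your proposal is correct and follows essentially the same route as the paper's proof: the Zhang--Sra cosine inequality (Lemma \ref{lem:CiAs}) applied to the triangle $x_k^i$, $\exp_{x_k^i}(-\alpha_k m_k^i/\sqrt{\hat v_k^i})$, $x_\ast^i$, non-expansiveness of $\Pi_{X_i}$, the momentum decomposition with Cauchy--Schwarz for the $\beta_{1k}$ term, telescoping via monotonicity of $\sqrt{\hat v_k^i}/\alpha_k$, and the tower property for the expectation. The only slight overstatement is your anticipated obstacle: Lemma \ref{lem:CiAs} only requires the side $c=d^i(x_k^i,x_\ast^i)\le D$ (both endpoints lie in $X_i$) together with monotonicity of $\zeta(\kappa_i,\cdot)$, not that the whole triangle lie in a set of diameter $D$.
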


\begin{proof}
See Appendix \ref{sec:apendpt}.
\end{proof}

\begin{corollary}
[Constant learning rate]~\label{cor:CLR} Suppose that the assumptions in Theorem \ref{thm:main} hold, $\alpha_n := \alpha > 0$, and $\beta_{1n}:=\beta \in [0,1)$. Then, Algorithm 1 satisfies, for all $x_\ast \in X_\ast$,
\begin{align*}
\expect{}{\frac{1}{n}\sum_{k=1}^nf(x_k)-f(x_\ast)} \leq \mathcal{O}\left(\frac{1}{n}\right) + C_1\alpha + C_2\beta,
\end{align*}
where $C_1,C_2>0$ are constants.
\end{corollary}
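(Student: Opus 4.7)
The plan is to derive Corollary \ref{cor:CLR} as a direct specialization of Theorem \ref{thm:main}: set $\alpha_n := \alpha$ and $\beta_{1n} := \beta$ and simplify each of the three summands on the right-hand side of \eqref{eq:main}. Before substituting, I would first check that the monotonicity hypotheses of Theorem \ref{thm:main} are actually satisfied by constant sequences: $\beta_{1n} \leq \beta_{1,n-1}$ trivially holds with equality, and the condition $\alpha_n(1-\beta_{1n}) \leq \alpha_{n-1}(1-\beta_{1,n-1})$ also holds with equality since both factors are constant. Hence Theorem \ref{thm:main} applies with $\beta_{11} = \beta$.

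Next I would handle the three terms on the right-hand side of \eqref{eq:main} one at a time. The first term becomes $\frac{NGD^2}{2(1-\beta)} \cdot \frac{1}{n\alpha}$; since $\alpha$ is a fixed positive constant, this is $\mathcal{O}(1/n)$. The second term becomes $\frac{G^2}{2\sqrt{\epsilon}(1-\beta)} \sum_{i=1}^N \zeta(\kappa_i,D) \cdot \frac{1}{n} \sum_{k=1}^n \alpha$, and the inner average is just $\alpha$, so the whole term equals $C_1 \alpha$ with
\begin{align*}
C_1 := \frac{G^2}{2\sqrt{\epsilon}(1-\beta)} \sum_{i=1}^N \zeta(\kappa_i, D).
\end{align*}
The third term likewise collapses to $C_2 \beta$ with
\begin{align*}
C_2 := \frac{NGD}{1-\beta},
\end{align*}
because $\frac{1}{n}\sum_{k=1}^n \beta = \beta$. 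Both $C_1$ and $C_2$ are finite positive constants under Assumption \ref{asm:main} (A1) (which guarantees $D < \infty$ and $G < \infty$) together with $\epsilon > 0$ and $\beta < 1$.

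Assembling the three estimates yields exactly the claimed bound $\mathcal{O}(1/n) + C_1 \alpha + C_2 \beta$. There is really no substantive obstacle here; the only thing to be careful about is confirming that the monotonicity assumptions on $(\alpha_n)$ and $(\beta_{1n})$ in Theorem \ref{thm:main} are met in the limiting ``equality'' sense by constant sequences, and that the definitions of $C_1$ and $C_2$ do not hide any $n$-dependence. Once that is checked, the corollary follows in a few lines from \eqref{eq:main}.
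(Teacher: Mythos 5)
Your proposal is correct and follows essentially the same route as the paper: verify that constant sequences satisfy the monotonicity hypotheses of Theorem \ref{thm:main}, then substitute $\alpha_n = \alpha$ and $\beta_{1n} = \beta$ into \eqref{eq:main} and read off the three terms. The only (immaterial) difference is that your $C_1$ keeps the factor $2\sqrt{\epsilon}$ in the denominator and is thus the exact coefficient, whereas the paper defines $C_1$ without the $2$, which is a slightly looser but still valid constant.
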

\begin{proof}
See Appendix \ref{sec:apendpc}.
\end{proof}

\begin{corollary}
[Diminishing learning rate]~\label{cor:DLR} Suppose that the assumptions in Theorem \ref{thm:main} hold, $\alpha_n := 1 / n^\eta$, where $\eta \in [1/2,1)$, and $\sum_{k=1}^\infty\beta_{1k} < \infty$ \footnote{ $\alpha_n := 1/n^\eta$ $(\eta \in [1/2,1))$, and $\beta_{1n}=\lambda^n$ ($\lambda \in [0,1)$) satisfy $\sum_{k=1}^\infty\beta_{1k}<\infty$, $\beta_{1n} \leq \beta_{1,n-1}$, and $\alpha_n(1-\beta_{1n}) \leq \alpha_{n-1}(1-\beta_{1,n-1})$ $(n \in \nat)$.}. Then, Algorithm 1 satisfies, for all $x_\ast \in X_\ast$,
\begin{align*}
\expect{}{\frac{1}{n}\sum_{k=1}^nf(x_k)-f(x_\ast)} = \mathcal{O}\left(\dfrac{1}{n^{1-\eta}}\right).
\end{align*}
\end{corollary}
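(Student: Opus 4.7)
The plan is to start from the bound provided by Theorem \ref{thm:main} and bound each of the three terms on the right-hand side separately under the choices $\alpha_n = 1/n^\eta$ with $\eta \in [1/2,1)$ and $\sum_{k=1}^\infty \beta_{1k} < \infty$. Call them $T_1(n)$, $T_2(n)$, $T_3(n)$ in the order they appear in \eqref{eq:main}.

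For $T_1(n) = \frac{NGD^2}{2(1-\beta_{11})} \cdot \frac{1}{n\alpha_n}$, I would substitute $\alpha_n = 1/n^\eta$ to get $\frac{1}{n\alpha_n} = \frac{1}{n^{1-\eta}}$, yielding $T_1(n) = O(n^{-(1-\eta)})$. For $T_2(n)$, I would use the standard integral comparison
\[
\sum_{k=1}^n \frac{1}{k^\eta} \leq 1 + \int_1^n \frac{dx}{x^\eta} = 1 + \frac{n^{1-\eta} - 1}{1-\eta},
\]
valid because $\eta < 1$, to conclude $\frac{1}{n}\sum_{k=1}^n \alpha_k = O(n^{-\eta})$ and hence $T_2(n) = O(n^{-\eta})$. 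For $T_3(n)$, the assumption $\sum_{k=1}^\infty \beta_{1k} < \infty$ makes the partial sum bounded, so $\frac{1}{n}\sum_{k=1}^n \beta_{1k} = O(1/n)$ and $T_3(n) = O(1/n)$.

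The final step is to combine the three rates. Since $\eta \in [1/2,1)$ forces $1-\eta \in (0, 1/2] \subseteq (0, \eta]$, we have $n^{-(1-\eta)} \geq n^{-\eta}$ for all $n \geq 1$, so $T_2$ is absorbed into $T_1$; similarly $n^{-1} \leq n^{-(1-\eta)}$ shows $T_3$ is absorbed into $T_1$. Therefore the dominant contribution is $T_1$, giving $\mathbb{E}\!\left[\frac{1}{n}\sum_{k=1}^n f(x_k) - f(x_\ast)\right] = O(n^{-(1-\eta)})$.

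There is essentially no hard step here: once Theorem \ref{thm:main} is granted, the corollary is a routine asymptotic comparison. The only subtlety worth verifying is that the hypotheses on $(\beta_{1n})$ and $(\alpha_n)$ required by Theorem \ref{thm:main}, namely $\beta_{1n} \leq \beta_{1,n-1}$ and $\alpha_n(1-\beta_{1n}) \leq \alpha_{n-1}(1-\beta_{1,n-1})$, are actually satisfied by the concrete choices $\alpha_n = 1/n^\eta$ and, e.g., $\beta_{1n} = \lambda^n$ for $\lambda \in [0,1)$ noted in the footnote; the monotonicity of $(\beta_{1n})$ is immediate, and for the product one checks that $(1-\lambda^n)/n^\eta$ is eventually decreasing in $n$, which I would verify by taking a derivative. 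Beyond this verification, the proof is just the bookkeeping above.
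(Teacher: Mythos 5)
Your proposal is correct and follows essentially the same route as the paper: both start from the bound in Theorem \ref{thm:main}, bound $\frac{1}{n}\sum_{k=1}^n \beta_{1k}$ by $B_1/n$ using $\sum_{k=1}^\infty \beta_{1k} < \infty$, use the same integral comparison $\sum_{k=1}^n k^{-\eta} \leq 1 + \int_1^n t^{-\eta}\,dt$, and conclude that the $\frac{1}{n\alpha_n} = n^{-(1-\eta)}$ term dominates since $\eta \geq 1/2$. The only cosmetic difference is that the paper folds the estimate $n^{-\eta} \leq n^{-(1-\eta)}$ directly into its bound $\frac{1}{n}\sum_{k=1}^n \alpha_k \leq \frac{1}{1-\eta}n^{-(1-\eta)}$, whereas you keep the sharper $O(n^{-\eta})$ and absorb it at the end.
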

\begin{proof}
See Appendix \ref{sec:apendpc}.
\end{proof}

\subsection{Comparison of Algorithm \ref{RAMSGrad} with the existing algorithms}
\label{subsec:3.2}
Algorithm 1 with $n=t\in\mathcal{T}$ coincides with RAMSGrad \cite[Figure 1(a)]{becigneul2018riemannian}. In \cite{becigneul2018riemannian}, B{\'e}cigneul and Ganea used ``regret" to guarantee the convergence of RAMSGrad. The regret at the end of $T$ iterations is defined as
\begin{align*}
R_T:=\sum_{t \in \mathcal{T}}f_t(x_t) - f_*,
\end{align*}
where $(f_t)_{t \in \mathcal{T}}$ is a family of differentiable, geodesically convex functions from $M$ to $\real$, $f_* := \min_{x\in X} \sum_{t \in \mathcal{T}} f_t(x)$, and $(x_t)_{t \in \mathcal{T}}$ is the sequence generated by RAMSGrad. They proved the following theorem \cite[Theorem 1]{becigneul2018riemannian}:

\begin{theorem}
[Convergence of RAMSGrad]~\label{thm:old} Suppose that Assumption \ref{asm:main} (A1) holds and that $f_t$ is smooth and geodesically convex for all $t \in \mathcal{T}$. Let $(x_t)_{t \in \mathcal{T}}$ and $(\hat{v}_t)_{t \in \mathcal{T}}$ be the sequences obtained from RAMSGrad, $\alpha_t=\alpha / \sqrt{t}$, $\beta_1=\beta_{11}$, $\beta_{1k} \leq \beta_{1}$ for all $t \in \mathcal{T}$, $\alpha > 0$, and $\gamma := \beta_1 / \sqrt{\beta_2} < 1$. We then have:
\begin{align*}
&R_T \leq \frac{\sqrt{T}D^2}{2\alpha(1-\beta_1)}\sum_{i=1}^N\sqrt{\hat{v}_T^i} + \frac{D^2}{2(1-\beta_1)}\sum_{i=1}^N\sum_{t=1}^T\beta_{1t}\dfrac{\sqrt{\hat{v}_t^i}}{\alpha_t} \\
&+\frac{\alpha\sqrt{1+\log{T}}}{(1-\beta_1)^2(1-\gamma)\sqrt{1-\beta_2}}\sum_{i=1}^N\frac{\zeta(\kappa_i,D)+1}{2}\sqrt{\sum_{t=1}^T\norm{g_t^i}{x_t^i}^2}.
\end{align*}
\end{theorem}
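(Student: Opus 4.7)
The plan is to mimic the Euclidean AMSGrad regret proof of Reddi, Kale, and Kumar, working componentwise on the product manifold $M = M_1 \times \cdots \times M_N$ and replacing the Euclidean cosine identity with a curvature-corrected Riemannian cosine inequality from Zhang--Sra \cite{zhang2016first} (codified in Lemma \ref{lem:CiAs}) that absorbs the lower curvature bound $\kappa_i \leq 0$ into the finite constant $\zeta(\kappa_i,D)$. I would start from geodesic convexity of $f_t$, which on the product manifold reads $f_t(x_t) - f_t(x_\ast) \leq \sum_{i=1}^N \ip{g_t^i}{-\log_{x_t^i}^i(x_\ast^i)}{x_t^i}$, and split $m_t^i = \beta_{1t}\tau_{t-1}^i + (1-\beta_{1t})g_t^i$ so as to rewrite each summand as a combination of an $m_t^i$-inner product and a $\tau_{t-1}^i$-inner product.

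For the $m_t^i$-inner product, I apply Lemma \ref{lem:CiAs} to the geodesic step $\exp_{x_t^i}^i(-\alpha_t m_t^i/\sqrt{\hat{v}_t^i})$, together with the fact that $\Pi_{X_i}$ is a nonexpansion of $d^i$ toward $x_\ast^i \in X_i$, to obtain
\begin{align*}
d^i(x_{t+1}^i,x_\ast^i)^2 \leq d^i(x_t^i,x_\ast^i)^2 - \frac{2\alpha_t}{\sqrt{\hat{v}_t^i}}\ip{m_t^i}{-\log_{x_t^i}^i(x_\ast^i)}{x_t^i} + \zeta(\kappa_i,D)\frac{\alpha_t^2\,\norm{m_t^i}{x_t^i}^2}{\hat{v}_t^i}.
\end{align*}
Solving for the inner product, combining with geodesic convexity, and handling the residual $\tau_{t-1}^i$ cross term via Cauchy--Schwarz plus Young's inequality (using $\norm{\log_{x_t^i}^i(x_\ast^i)}{x_t^i} \leq D$ and the isometry $\norm{\tau_{t-1}^i}{x_t^i} = \norm{m_{t-1}^i}{x_{t-1}^i}$) yields a per-step upper bound containing the distance difference $d^i(x_t^i,x_\ast^i)^2 - d^i(x_{t+1}^i,x_\ast^i)^2$ weighted by $\sqrt{\hat{v}_t^i}/(2\alpha_t(1-\beta_1))$, a $\beta_{1t}D^2\sqrt{\hat{v}_t^i}/(2\alpha_t(1-\beta_1))$ term, and an $\alpha_t\norm{m_t^i}{x_t^i}^2/\sqrt{\hat{v}_t^i}$ term whose prefactor is proportional to $\zeta(\kappa_i,D)+1$.

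Summing this bound over $t = 1,\ldots,T$, the distance-difference piece collapses by Abel summation — exploiting that $\sqrt{\hat{v}_t^i}$ is nondecreasing and $\alpha_t = \alpha/\sqrt{t}$, with $d^i(\,\cdot\,,x_\ast^i)^2 \leq D^2$ — into $\frac{\sqrt{T}D^2}{2\alpha(1-\beta_1)}\sum_i\sqrt{\hat{v}_T^i}$, the first term of the claim. The middle piece reassembles into the second term directly. For the residual $\sum_t \alpha_t\norm{m_t^i}{x_t^i}^2/\sqrt{\hat{v}_t^i}$, I would invoke the standard AMSGrad double-summation trick: Cauchy--Schwarz and parallel-transport isometry give $\norm{m_t^i}{x_t^i}^2 \leq (1-\beta_1)\sum_{k=1}^t \beta_1^{t-k}\norm{g_k^i}{x_k^i}^2$; since $\hat{v}_t^i \geq (1-\beta_2)\beta_2^{t-k}\norm{g_k^i}{x_k^i}^2$, swapping the order of summation and using $\gamma = \beta_1/\sqrt{\beta_2} < 1$ to sum a geometric series in $k$, together with $\sum_{t\leq T} 1/\sqrt{t} \leq \sqrt{T(1+\log T)}$ and a final Cauchy--Schwarz over $t$, produce the $\sqrt{1+\log T}\sqrt{\sum_t\norm{g_t^i}{x_t^i}^2}$ factor. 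Summing over $i$ delivers the third term.

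The main obstacle is the very first substitute for Euclidean geometry: the classical parallelogram identity $\|y-z\|^2 = \|x-z\|^2 + \|y-x\|^2 - 2\langle y-x, z-x\rangle$ fails on a curved manifold, and the crux of the Riemannian extension is the Zhang--Sra cosine inequality, whose correction factor $\zeta(\kappa_i,D)$ is finite only because $\kappa_i \leq 0$ and the diameter is bounded — precisely the hypotheses in Assumption \ref{asm:main} (A1). Once that inequality is in hand, the remainder is essentially bookkeeping from the Euclidean AMSGrad analysis, carried out componentwise on the product manifold, with the observation that parallel transport makes $\norm{\tau_{t-1}^i}{x_t^i}$ interchangeable with $\norm{m_{t-1}^i}{x_{t-1}^i}$ throughout.
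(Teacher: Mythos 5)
Note that the paper does not prove Theorem \ref{thm:old} at all: it is quoted verbatim from B\'ecigneul and Ganea \cite[Theorem 1]{becigneul2018riemannian} purely for comparison, so there is no internal proof to measure you against. Your sketch is the correct and standard route — the AMSGrad regret argument of Reddi et al.\ transplanted componentwise via the Zhang--Sra cosine inequality (Lemma \ref{lem:CiAs}), projection nonexpansiveness, the momentum split with Young's inequality on the $\tau_{t-1}^i$ cross term, and the $\gamma=\beta_1/\sqrt{\beta_2}$ double-summation trick — which is precisely how the cited source proves it and which also coincides, in its opening steps, with this paper's own proof of Theorem \ref{thm:main}.
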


Note that Theorem \ref{thm:old} asserts the regret generated by RAMSGrad has an upper bound. We should also note that regret minimization does not always lead to solutions of Problem \ref{pbl:main}. This is because, even if $(x_t)_{t \in \mathcal{T}}$ satisfies, for a sufficiently large number $T$, 
\begin{align*}
R_T = \sum_{t \in \mathcal{T}}f_t(x_t) - f_* \approx 0,
\end{align*}
and we do {\em not} have that
\begin{align*}
Tf(x_T) - f_* = \sum_{t \in \mathcal{T}}f_t(x_T) - f_* \approx 0.
\end{align*} 
Accordingly, Theorem \ref{thm:old} does not guarantee that the output $x_T$ generated by RAMSGrad approximates the solution of Problem \ref{pbl:main}. Additionally, Theorem \ref{thm:old} assumes a diminishing learning rate $\alpha_t$ and does not assert anything about a constant learning rate.

Meanwhile, Corollary \ref{cor:CLR} implies that, if we use sufficiently small constant learning rates $\alpha$ and $\beta$, then Algorithm \ref{RAMSGrad} satisfies 
\begin{align*}
\expect{}{\frac{1}{n}\sum_{k=1}^nf(x_k)-f_*} 
\leq \mathcal{O}\left(\frac{1}{n}\right) + C_1 \alpha + C_2 \beta
\approx \mathcal{O}\left(\frac{1}{n}\right),
\end{align*}
which implies that Algorithm \ref{RAMSGrad} approximates the solution of Problem \ref{pbl:main} in the sense of the mean value of $f(x_k)$. Although Theorem \ref{thm:old} can only use diminishing learning rates such that $\alpha_t := \alpha / \sqrt{t}$, Corollary \ref{cor:CLR} guarantees that Algorithm 1 with a constant learning rate can solve Problem \ref{pbl:main}.

Corollary \ref{cor:DLR} implies that Algorithm 1 with a diminishing learning rate can solve Problem \ref{pbl:main} in the sense that
\begin{align*}
\expect{}{\frac{1}{n}\sum_{k=1}^nf(x_k)-f_*} = \mathcal{O}\left(\dfrac{1}{n^{1-\eta}}\right),
\end{align*}
while Theorem \ref{thm:old} implies that RAMSGrad only minimizes the regret in the sense of the existence of a positive real number $C$ such that
\begin{align*}
\frac{R_T}{T} \leq C\sqrt{\frac{1+\log{T}}{T}}.
\end{align*}
Additionally, Theorem \ref{thm:old} implies RAMSGrad only works in the case where $\eta = 1/2$, but Corollary \ref{cor:DLR} implies Algorithm \ref{RAMSGrad} works for a wider range of $\eta$.

The advantage of Corollary \ref{cor:DLR} over Corollary \ref{cor:CLR} is that using a diminishing learning rate is a robust way to solve Problem \ref{pbl:main}. However, it is possible that Algorithm \ref{RAMSGrad} with a diminishing learning rate does not work for a sufficiently large number $S$ of iterations, because step 8 in Algorithm \ref{RAMSGrad} with $\alpha_S \approx 0$ satisfies
\begin{align*}
x_{S+1}^i=\Pi_{X_i}\left[\exp_{x_S^i}^i\left(-\alpha_S \dfrac{m_S^i}{\sqrt{\hat{v}_S^i}}\right)\right]
\approx
x_S^i.
\end{align*}
Such a trend was observed in \cite{iiduka2020appropriate}. The numerical results in \cite{iiduka2020appropriate} showed that Euclidean adaptive optimization algorithms, such as Adam and AMSGrad, with constant learning rates (e.g., $\alpha_n = \beta_{1n} = 10^{-3}$) perform better than those with diminishing ones in terms of both the training loss and accuracy score. Moreover, we can see that the Euclidean adaptive optimization algorithms in \texttt{torch.optim}\footnote{\url{https://pytorch.org/docs/stable/optim.html}} use constant learning rates, such as $\alpha_n = 10^{-3}$ and $\beta_{1n} = 0.9, 0.999$.

According to \cite[Section 2]{bonnabel2013stochastic} and \cite[Section 5]{becigneul2018riemannian}, useful constant learning rates in RAMSGrad are $\alpha_n = 0.3, 0.1$ and $\beta_{1n} = 0.9$. Meanwhile, Corollary \ref{cor:CLR} indicates that using a small constant learning rate $\beta_{1n}$ would be good for solving Problem \ref{pbl:main}. Accordingly, the next section numerically compares the behavior of Algorithm \ref{RAMSGrad} with $\beta_{1n} = 0.9$ with one with $\beta_{1n} = 0.001 < 0.9$. Corollary \ref{cor:DLR} (see also Theorem \ref{thm:old}) indicates that Algorithm \ref{RAMSGrad} should use diminishing learning rates such that $\alpha_n = \mathcal{O}(1/\sqrt{n})$ and $\beta_{1n} = \lambda^n$ ($\lambda \in [0,1)$). The next section uses diminishing learning rates to compare fairly the behaviors of Algorithm \ref{RAMSGrad} with constant learning rates (see Section \ref{sec:NE} for details).

\section{Numerical Experiments}
~\label{sec:NE}
We numerically compared the following Riemannian stochastic optimization algorithms: RSGD \cite[Section 2]{bonnabel2013stochastic}, RAdaGrad \cite[Section 3.2]{becigneul2018riemannian}, RAdam \cite[Section 4]{becigneul2018riemannian}, and Algorithm \ref{RAMSGrad} (modified RAMSGrad). RAdam is obtained by removing the $\max$ operation in Algorithm \ref{RAMSGrad}, i.e., replacing $\hat{v}_n^i=\max\{\hat{v}_{n-1}^i,v_n^i\} + \epsilon$ with $\hat{v}_n^i = v_n^i + \epsilon$ (see \cite{becigneul2018riemannian}). Our experiments were conducted on a fast scalar computation server\footnote{\url{https://www.meiji.ac.jp/isys/hpc/ia.html}} at Meiji University. The environment has two Intel(R) Xeon(R) Gold 6148 (2.4 GHz, 20 cores) CPUs, an NVIDIA Tesla V100 (16GB, 900Gbps) GPU and a Red Hat Enterprise Linux 7.6 operating system.

\subsection{Poincar{\'e} embeddings}
In \cite{nickel2017poincare}, Nickel and Kiela developed Poincar{\'e} embeddings. Before describing the numerical experiments, we will review the fundamentals of hyperbolic geometry (see \cite{nickel2017poincare, ungar2008gyrovector, ganea2018hyperbolic, becigneul2018riemannian}). $\mathcal{B}^d:=\{x \in \real^d:\norm{x}{}<1\}$ denotes the open $d$-dimensional unit ball, where $\norm{\cdot}{}$ denotes the Euclidean norm. The Poincar{\'e} ball model of hyperbolic space $(\mathcal{B}^d,\rho)$ is defined by a manifold $\mathcal{B}^d$ equipped with the following Riemannian metric:
\begin{align*}
\rho_x := \frac{4}{(1-\norm{x}{}^2)^2}\rho^E_x,
\end{align*}
where $x \in \mathcal{B}^d$, and $\rho^E_x$ denotes the Euclidean metric tensor. The Riemannian manifold $(\mathcal{B}^d,\rho)$ has a constant sectional curvature, $-1$. We define M\"{o}bius addition \cite[Definition 1.10]{ungar2008gyrovector} of $x$ and $y$ in $\mathcal{B}^d$ as
\begin{align*}
x \oplus_M y := \frac{(1+2\ip{x}{y}{}+\norm{y}{}^2)x + (1-\norm{x}{}^2)y}{1+2 \ip{x}{y}{} + \norm{x}{}^2\norm{y}{}^2},
\end{align*}
where $\ip{\cdot}{\cdot}{} := \rho^E(\cdot ,\cdot)$. Moreover, $\ominus_M x$ denotes the left inverse \cite[Definition 1.7]{ungar2008gyrovector} of $x \in \mathcal{B}^d$, and the M{\"o}bius gyrations \cite[Definition 1.11]{ungar2008gyrovector} of $\mathcal{B}^d$ are defined as
\begin{align*}
\textrm{gyr}[x,y]z := \ominus_M (x \oplus_M y) \oplus_M \{x \oplus_M (y \oplus_M z)\},
\end{align*}
for all $x,y,z \in \mathcal{B}^d$.

In accordance with the above statements, the induced distance function on $(\mathcal{B}^d,\rho)$ (see \cite[Eq. (6)]{ganea2018hyperbolic}) is defined for all $x,y \in \mathcal{B}^d$, by
\begin{align}~\label{eq:dist}
d(x,y)=2\tanh^{-1}\left(\norm{(-x) \oplus_M y}{}\right).
\end{align}
The exponential map on $(\mathcal{B}^d,\rho)$ (see \cite[Lemma 2]{ganea2018hyperbolic}) is expressed as follows: for $x\in\mathcal{B}^d$ and $\xi \neq 0 \in T_x\mathcal{B}^d$,
\begin{align*}
\exp_{x}(\xi)=x \oplus_M \left\{ \tanh\left(\frac{\norm{\xi}{}}{1-\norm{x}{}^2}\right) \right\}\frac{\xi}{\norm{\xi}{}},
\end{align*}
and, for $x\in\mathcal{B}^d$ and $0 \in T_x\mathcal{B}^d$,
\begin{align*}
\exp_{x}(0)=x.
\end{align*}
Parallel transport of $(\mathcal{B}^d,\rho)$ (see \cite[Section 5]{becigneul2018riemannian}) along the unique geodesic from $x$ to $y$ is given by
\begin{align*}
\varphi_{x \rightarrow y}(\xi) = \frac{1-\norm{y}{}^2}{1-\norm{x}{}^2}\textrm{gyr}[y,-x]\xi .
\end{align*}
The Riemannian gradient on $(\mathcal{B}^d,\rho)$ (see \cite[Section 5]{becigneul2018riemannian}) is expressed in terms of rescaled Euclidean gradients, i.e., for $x \in \mathcal{B}^d$, and the smooth function $f:\mathcal{B}^d \rightarrow \real$,
\begin{align*}
\grad{f(x)} = \frac{(1-\norm{x}{}^2)^2}{4}\nabla^Ef(x),
\end{align*}
where $\nabla^Ef(x)$ denotes the Euclidean gradient of $f$.

To compute the Poincar{\'e} embeddings for a set of $N$ symbols by finding the embeddings $\Theta = \{u_i\}_{i=1}^N$, where $u_i \in \mathcal{B}^d$, we solve the following optimization problem: given $\mathcal{L}:\mathcal{B}^d \times \cdots \times \mathcal{B}^d \rightarrow \real$,
\begin{align}~\label{eq:PEproblem}
\textrm{minimize }\mathcal{L}(\Theta) \quad \textrm{subject to } u_i \in \mathcal{B}^d.
\end{align}

The transitive closure of the WordNet mammals subtree consists of 1,180 nouns and 6,450 hypernymy Is-A relations. Let $\mathcal{D} = \{(u, v)\}$ be the set of observed hypernymy relations between noun pairs. We minimize a loss function defined by
\begin{align}~\label{eq:lossfunction}
\mathcal{L}(\Theta)=\sum_{(u,v)\in\mathcal{D}}\log{\dfrac{e^{-d(u,v)}}{\sum_{v^\prime \in \mathcal{N}(u)}e^{-d(u,v^\prime)}}},
\end{align}
where $d(u,v)$ defined by \eqref{eq:dist} is the corresponding distance of the relation $(u,v) \in \mathcal{D}$, and $\mathcal{N}(u) = \{v^\prime : (u,v^\prime) \not\in \mathcal{D}\} \cup \{v\}$ is the set of negative examples for $u$ including $v$ (see \cite{becigneul2018riemannian, nickel2017poincare}). We embed the transitive closure of the WordNet mammals subtree into a $5$-dimensional Poincar{\'e} ball $(\mathcal{B}^5,\rho)$.

Let us define $M_i:=\mathcal{B}^5$ and $X_i:=\{x\in\mathcal{B}^5:\norm{x}{}\leq 1-10^{-5}\}$, whose projection operator $\Pi_{X_i}:\mathcal{B}^5 \rightarrow X_i$ is computed as
\begin{align*}
\Pi_{X_i}(x):=
\begin{cases}
x & \textrm{if }\norm{x}{} \leq 1-10^{-5} \\
(1-10^{-5})\dfrac{x}{\norm{x}{}} & \textrm{otherwise}
\end{cases}.
\end{align*}
Moreover, the geodesically convex set $X_i$ has a bounded diameter; in fact, let $D$ be the diameter of a closed disk $X_i$, measured by the Riemann metric of $\rho$.

As in \cite{nickel2017poincare}, we will introduce an index for evaluating the embedding. For each observed relation $(u, v) \in \mathcal{D}$, we compute the corresponding distance $d(u, v)$ in the embedding and rank it among the set of negative relations for $u$, i.e., among the set $\{d(u, v^\prime):(u,v^\prime) \not\in \mathcal{D}\}$. In addition, we assume the reconstruction setting (see \cite{nickel2017poincare}); i.e., we evaluate the ranking of all nouns in the dataset. Then, we record the mean rank of $v$ as well as the mean average precision (MAP) of the ranking. Thus, we evaluate the embedding in terms of the loss function values and the MAP rank.

We experimented with a special iteration called the ``burn-in phase" (see \cite[Section 3]{nickel2017poincare}) for the first 20 epochs. During the burn-in phase, the algorithm runs at a reduced learning rate of $1/100$. When we minimized the loss function \eqref{eq:lossfunction}, we randomly sampled 10 negative relations per positive relation. We set $\epsilon = 10^{-8}$ in Algorithm 1.

The experiment used the code of Facebook Research\footnote{\url{https://github.com/facebookresearch/poincare-embeddings}}, and we used the NumPy 1.17.3 package and PyTorch 1.3.0 package.

\subsubsection{Constant learning rate}
First, we compared algorithms with the following ten constant learning rates:
\begin{description}
\item[(CS1)] RSGD: $\alpha_n = 0.3$.
\item[(CS2)] RSGD: $\alpha_n = 0.1$.
\item[(CG1)] RAdaGrad: $\alpha_n=0.3$.
\item[(CG2)] RAdaGrad: $\alpha_n=0.1$.
\item[(CD1)] RAdam: $\alpha_n=0.3$, $\beta_{1n} = 0.9$, $\beta_2 = 0.999$.
\item[(CD2)] RAdam: $\alpha_n=0.1$, $\beta_{1n} = 0.9$, $\beta_2 = 0.999$.
\item[(CA1)] Algorithm 1: $\alpha_n = 0.3$, $\beta_{1n} = 0.9$, $\beta_2 = 0.999$.
\item[(CA2)] Algorithm 1: $\alpha_n = 0.3$, $\beta_{1n} = 0.001$, $\beta_2 = 0.999$.
\item[(CA3)] Algorithm 1: $\alpha_n = 0.1$, $\beta_{1n} = 0.9$, $\beta_2 = 0.999$.
\item[(CA4)] Algorithm 1: $\alpha_n = 0.1$, $\beta_{1n} = 0.001$, $\beta_2 = 0.999$.
\end{description}
The parameter $\alpha_n$ in (CS1) and (CS2) represents the learning rate of RSGD \cite[Section 2]{bonnabel2013stochastic}. The learning rates of (CA1)--(CA4) satisfy the assumptions of Corollary \ref{cor:CLR}. The parameters $\beta_2 = 0.999$ and $\beta_{1n}=0.9$ in (CA1) and (CA3) are used in \cite[Section 5]{becigneul2018riemannian}. We used $\beta_{1n}=0.001$ in (CA2) and (CA4) to compare (CA1) and (CA3) with Algorithm \ref{RAMSGrad} with a small learning rate. Figs. \ref{fig:c-lo-ep}--\ref{fig:c-map-el} show the numerical results. Fig. \ref{fig:c-lo-ep} shows the performances of the algorithms for loss function values defined by \eqref{eq:lossfunction} with respect to the number of epochs, while Fig. \ref{fig:c-lo-el} presents those with respect to the elapsed time. Fig. \ref{fig:c-map-ep} shows the MAP ranks of the embeddings with respect to the number of epochs, while Fig. \ref{fig:c-map-el} presents the MAP ranks with respect to the elapsed time. We can see that Algorithm \ref{RAMSGrad} outperforms RSGD and RAdaGrad in every setting. In particular, Figs. \ref{fig:c-lo-ep}--\ref{fig:c-lo-el} show that the learning outcomes of RSGD fluctuate greatly depending on the learning rate. In contrast, Algorithm \ref{RAMSGrad} and RAdam eventually reduce the loss function the most for any learning rate. Moreover, these figures show that the performance of (CA1) (resp. (CA3)) is comparable to that of (CA2) (resp. (CA4)). Meanwhile, RAdaGrad quickly reduced the objective function value in the early stages; however, it soon stopped learning.

\begin{figure}[!t]
\centering
\includegraphics[width=3.0in]{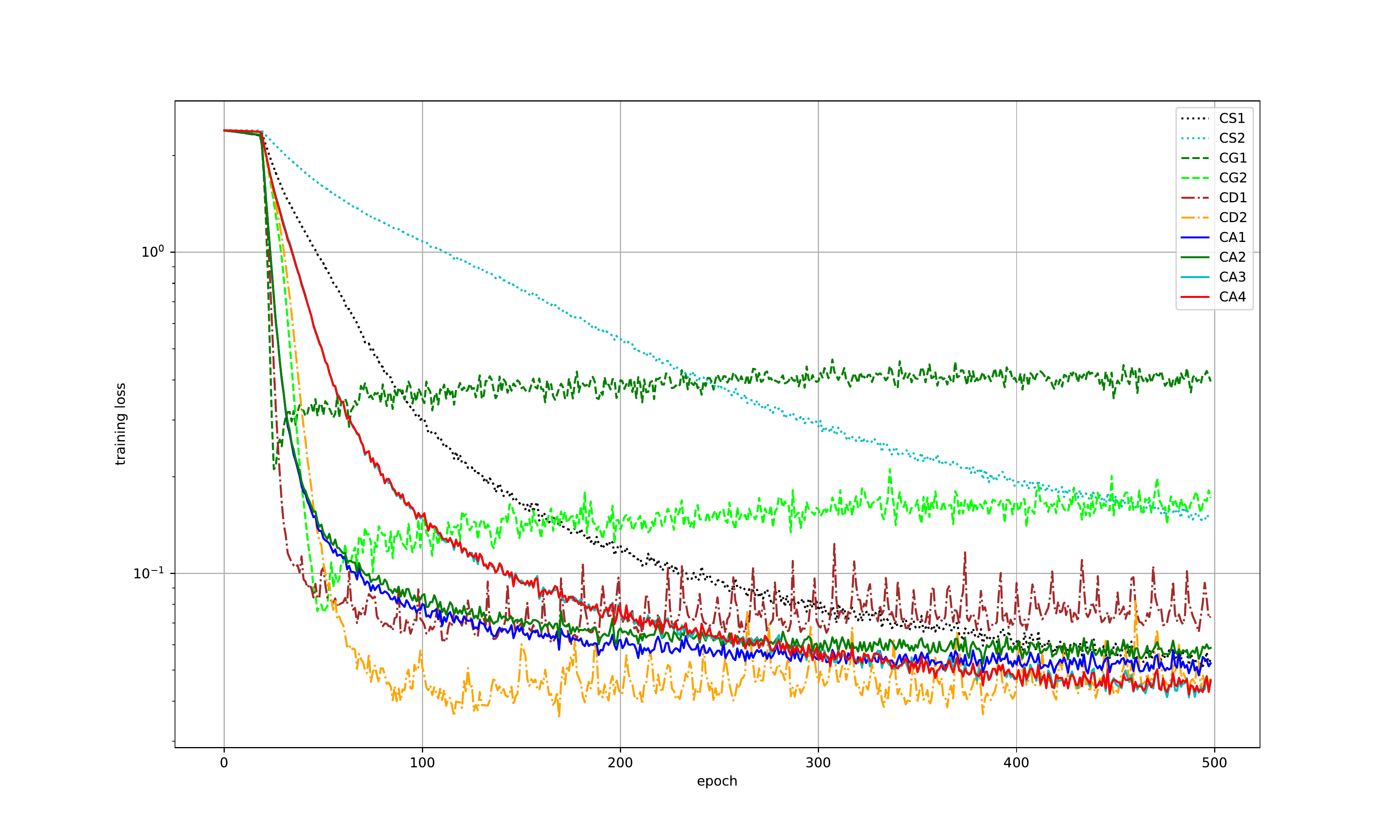}
 \DeclareGraphicsExtensions{.pdf}
\caption{Loss function value versus number of epochs in the case of constant learning rates. \label{fig:c-lo-ep}}
\end{figure}

\begin{figure}[!t]
\centering
\includegraphics[width=3.0in]{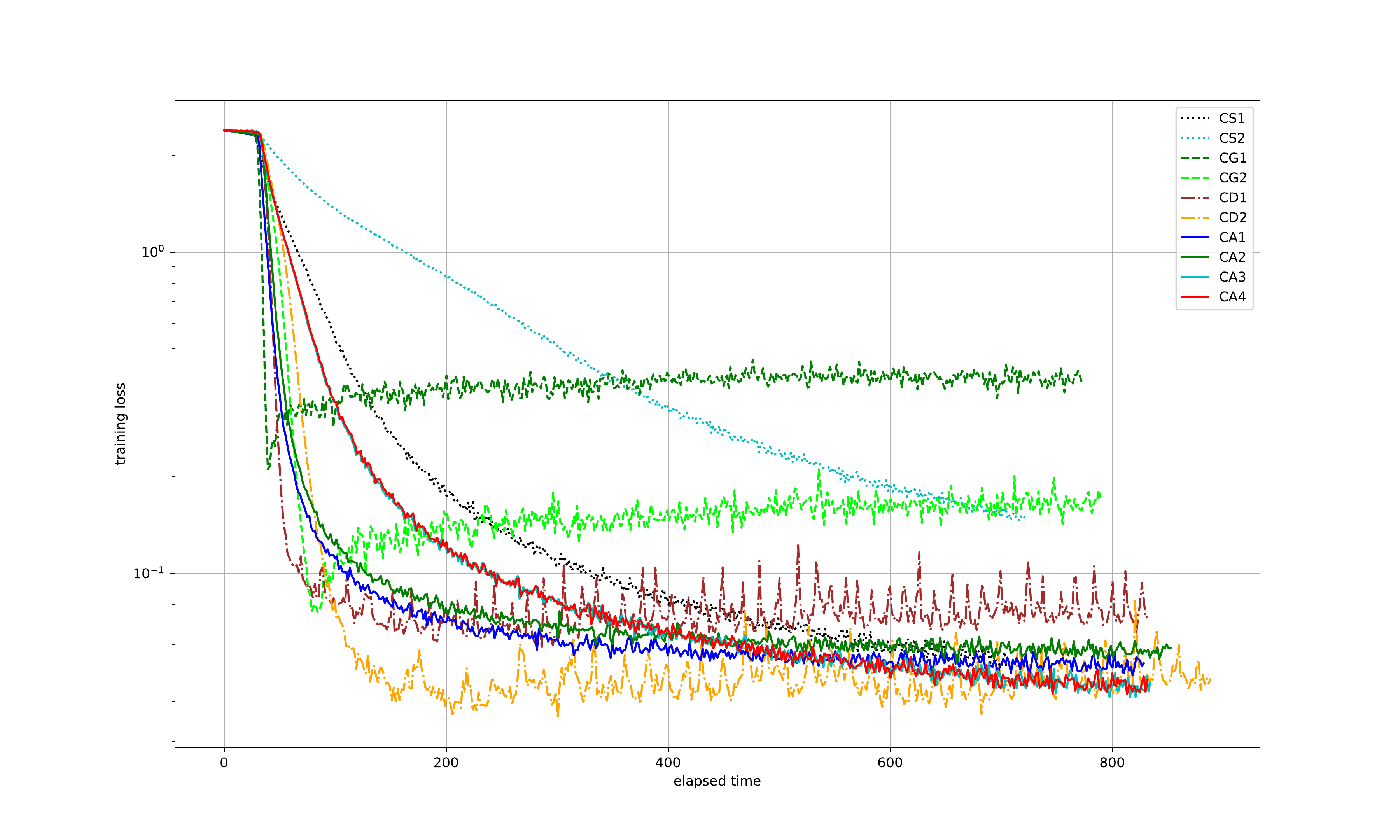}
 \DeclareGraphicsExtensions{.pdf}
\caption{Loss function value versus elapsed time in the case of constant learning rates. \label{fig:c-lo-el}}
\end{figure}

\begin{figure}[!t]
\centering
\includegraphics[width=3.0in]{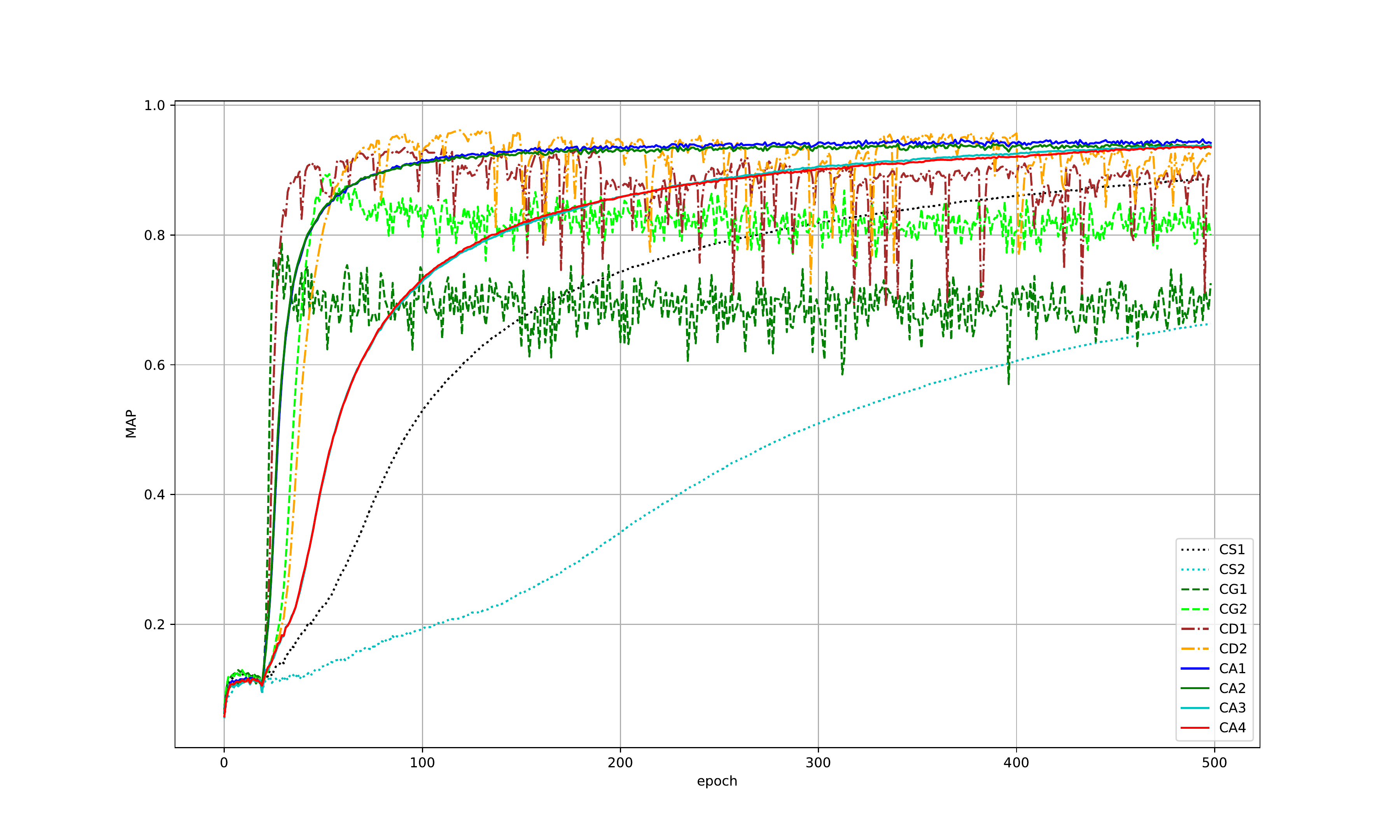}
 \DeclareGraphicsExtensions{.pdf}
\caption{MAP rank versus number of epochs in the case of constant learning rates. \label{fig:c-map-ep}}
\end{figure}

\begin{figure}[!t]
\centering
\includegraphics[width=3.0in]{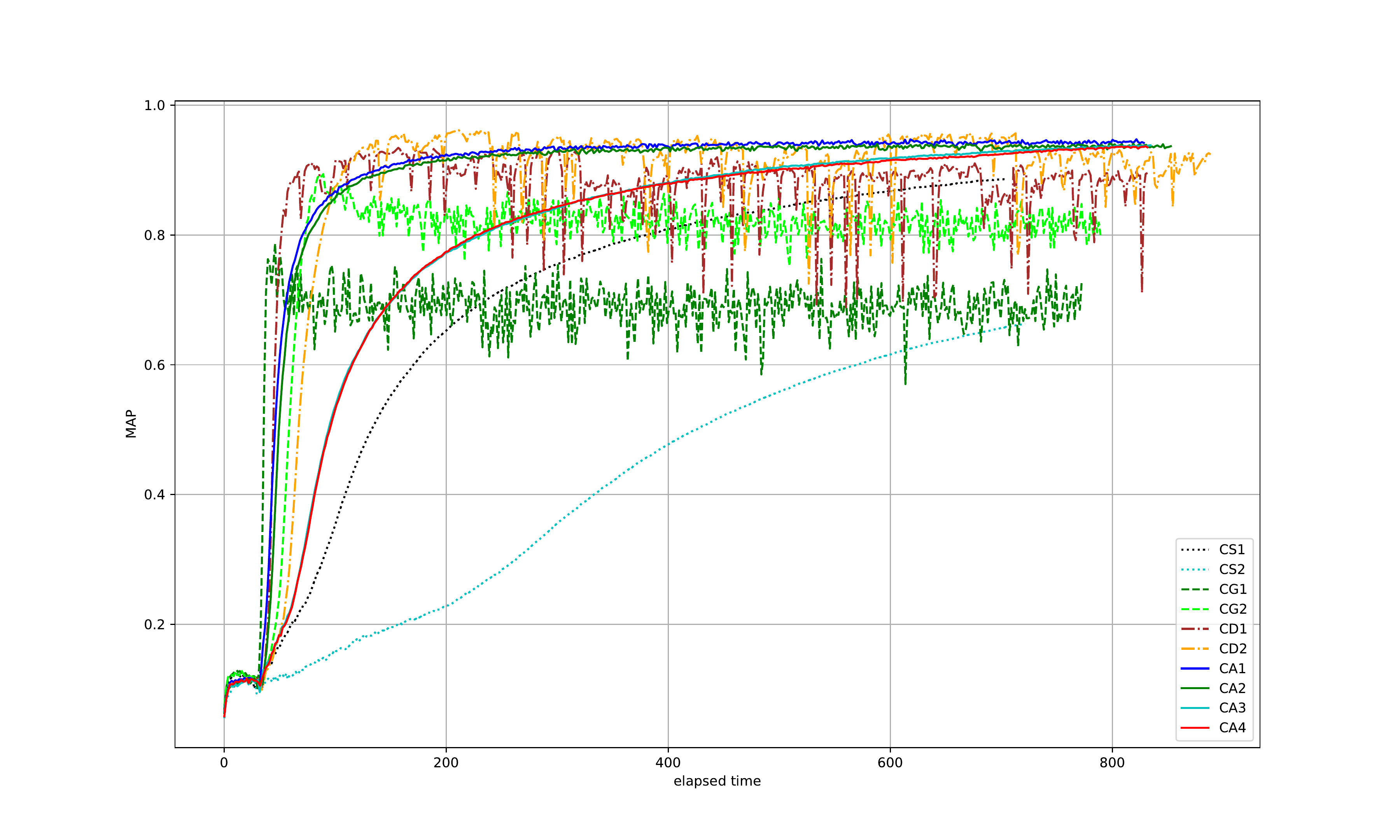}
 \DeclareGraphicsExtensions{.pdf}
\caption{MAP rank versus elapsed time in the case of constant learning rates. \label{fig:c-map-el}}
\end{figure}

\subsubsection{Diminishing learning rate}
Next, we compared algorithms with the following ten diminishing learning rates:
\begin{description}
\item[(DS1)] RSGD: $\alpha_n = 30 / \sqrt{n}$.
\item[(DS2)] RSGD: $\alpha_n = 10 / \sqrt{n}$.
\item[(DG1)] RAdaGrad: $\alpha_n=30 / \sqrt{n}$.
\item[(DG2)] RAdaGrad: $\alpha_n=10 / \sqrt{n}$.
\item[(DD1)] RAdam: $\alpha_n=30 / \sqrt{n}$, $\beta_{1n} = 0.5^n$, $\beta_2 = 0.999$.
\item[(DD2)] RAdam: $\alpha_n=10 / \sqrt{n}$, $\beta_{1n} = 0.5^n$, $\beta_2 = 0.999$.
\item[(DA1)] Algorithm 1: $\alpha_n = 30 / \sqrt{n}$, $\beta_{1n} = 0.5^n$, $\beta_2 = 0.999$.
\item[(DA2)] Algorithm 1: $\alpha_n = 30 / \sqrt{n}$, $\beta_{1n} = 0.9^n$, $\beta_2 = 0.999$.
\item[(DA3)] Algorithm 1: $\alpha_n = 10 / \sqrt{n}$, $\beta_{1n} = 0.5^n$, $\beta_2 = 0.999$.
\item[(DA4)] Algorithm 1: $\alpha_n = 10 / \sqrt{n}$, $\beta_{1n} = 0.9^n$, $\beta_2 = 0.999$.
\end{description}
The learning rates of (DA1)--(DA4) satisfy the assumptions of Corollary \ref{cor:DLR}. We implemented (DA2) and (DA4) to compare them with (CA1) and (CA3). We implemented (DA1) and (DA3) to check how well Algorithm \ref{RAMSGrad} works depending on the choice of $\beta_{1n}$. Figs. \ref{fig:d-lo-ep}--\ref{fig:d-map-el} show the numerical results. Fig. \ref{fig:d-lo-ep} shows the behaviors of the algorithms for loss function values defined by \eqref{eq:lossfunction} with respect to the number of epochs, whereas Fig. \ref{fig:d-lo-el} shows those with respect to the elapsed time. Fig. \ref{fig:d-map-ep} presents the MAP ranks of the embeddings with respect to the number of epochs, while Fig. \ref{fig:d-map-el} shows MAP ranks with respect to the elapsed time. Even in the case of diminishing learning rates, Algorithm \ref{RAMSGrad} outperforms RSGD in every setting. The learning results of RSGD fluctuate greatly depending on the initial learning rate. In particular, (DS2) reduces the loss function more slowly than the other algorithms do. On the other hand, Algorithm \ref{RAMSGrad} and RAdam stably reduce the loss function, regardless of the initial learning rate. Moreover, these figures indicate that (DA2) outperforms (DA1) and that (DA3) performs comparably to (DA4). In addition, RAdaGrad is better or worse than Algorithm \ref{RAMSGrad} depending on how we choose the initial learning rates.

From Figs. 2 and 6, we can see that (CA1) (resp. (CA3)) outperforms (DA2) (resp. (DA4)). The above discussion shows that Algorithm \ref{RAMSGrad} with a constant learning rate is superior to the other algorithms at embedding the WordNet mammals subtree into a Poinca{\'e} ball.

\begin{figure}[!t]
\centering
\includegraphics[width=3.0in]{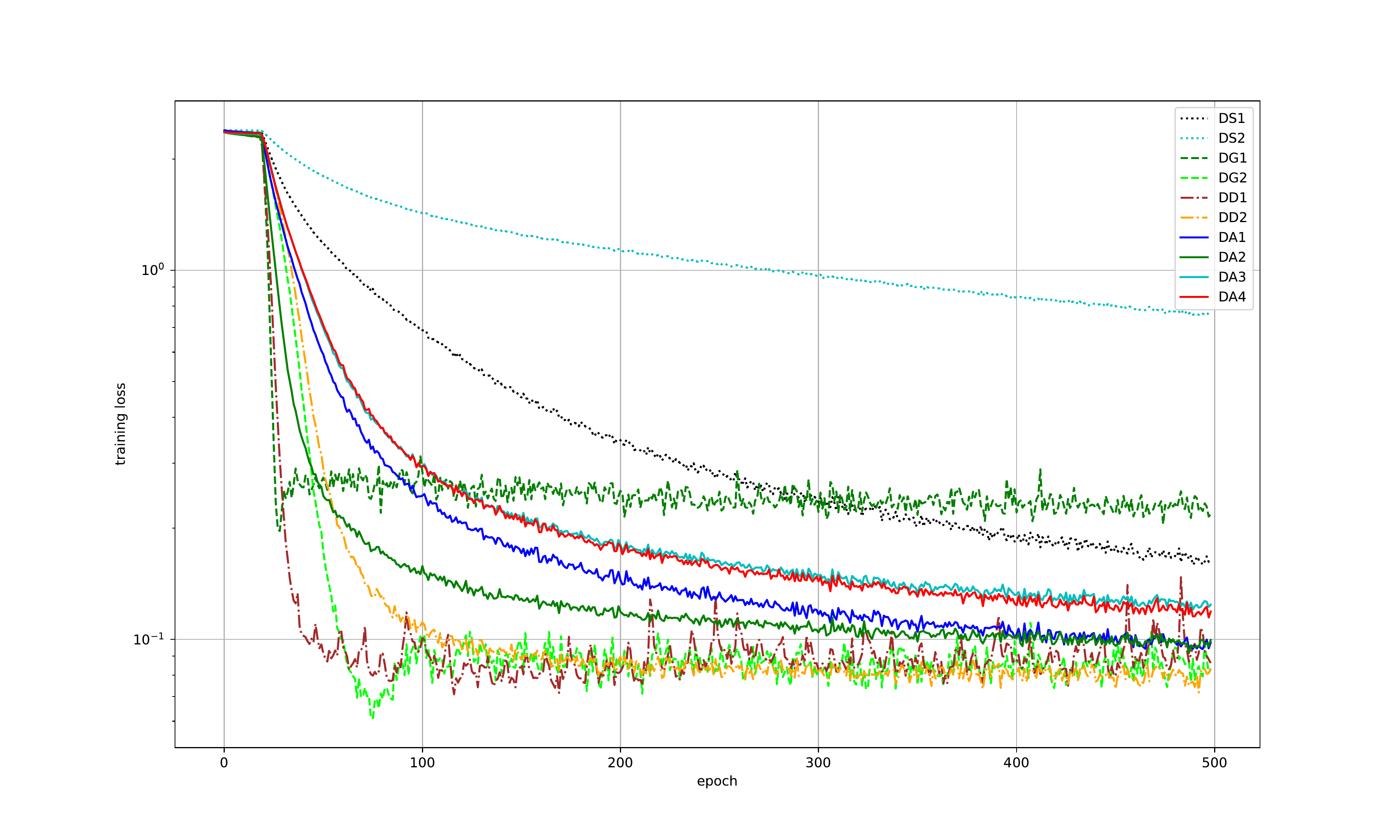}
 \DeclareGraphicsExtensions{.pdf}
\caption{Loss function value versus number of epochs in the case of diminishing learning rates. \label{fig:d-lo-ep}}
\end{figure}

\begin{figure}[!t]
\centering
\includegraphics[width=3.0in]{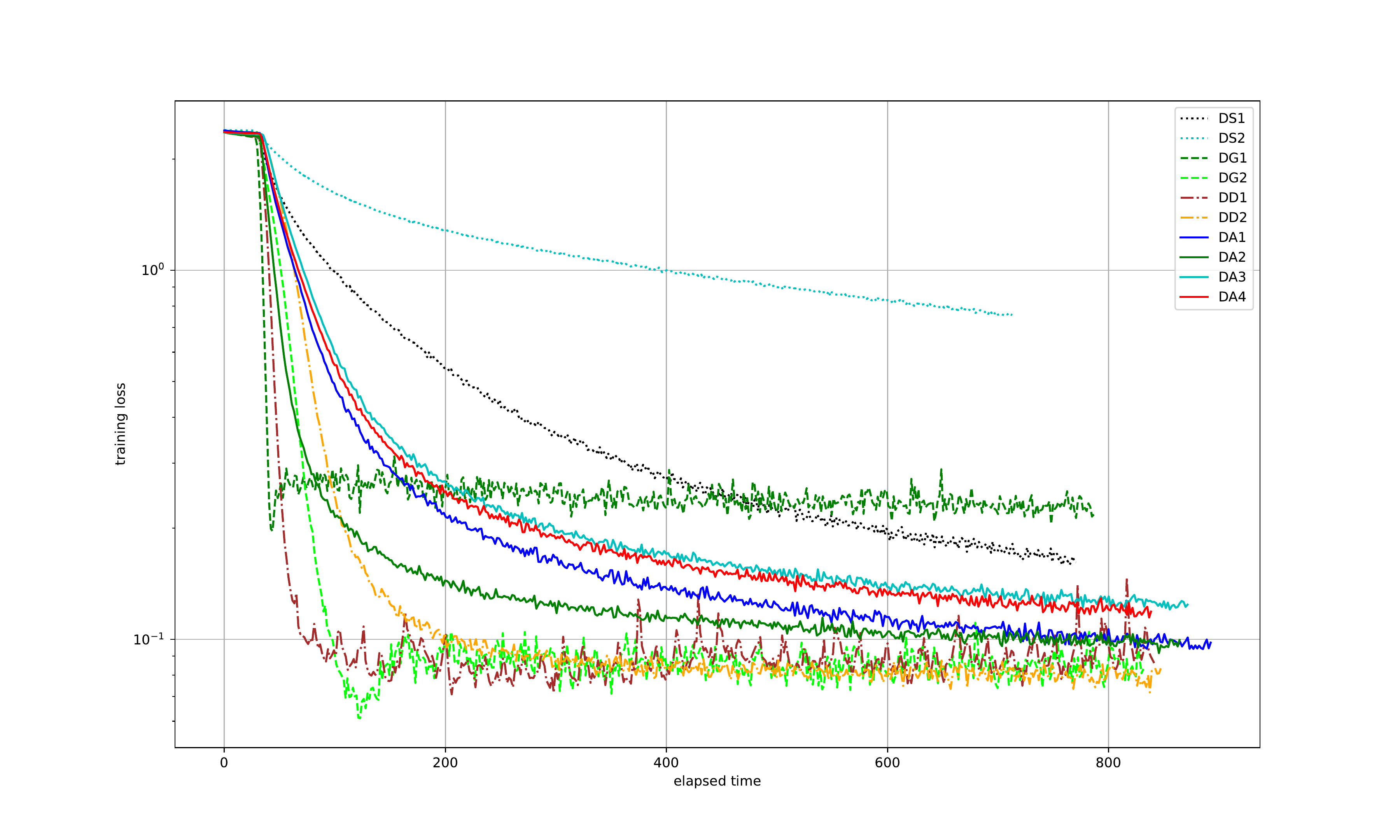}
 \DeclareGraphicsExtensions{.pdf}
\caption{Loss function value versus elapsed time in the case of diminishing learning rates. \label{fig:d-lo-el}}
\end{figure}

\begin{figure}[!t]
\centering
\includegraphics[width=3.0in]{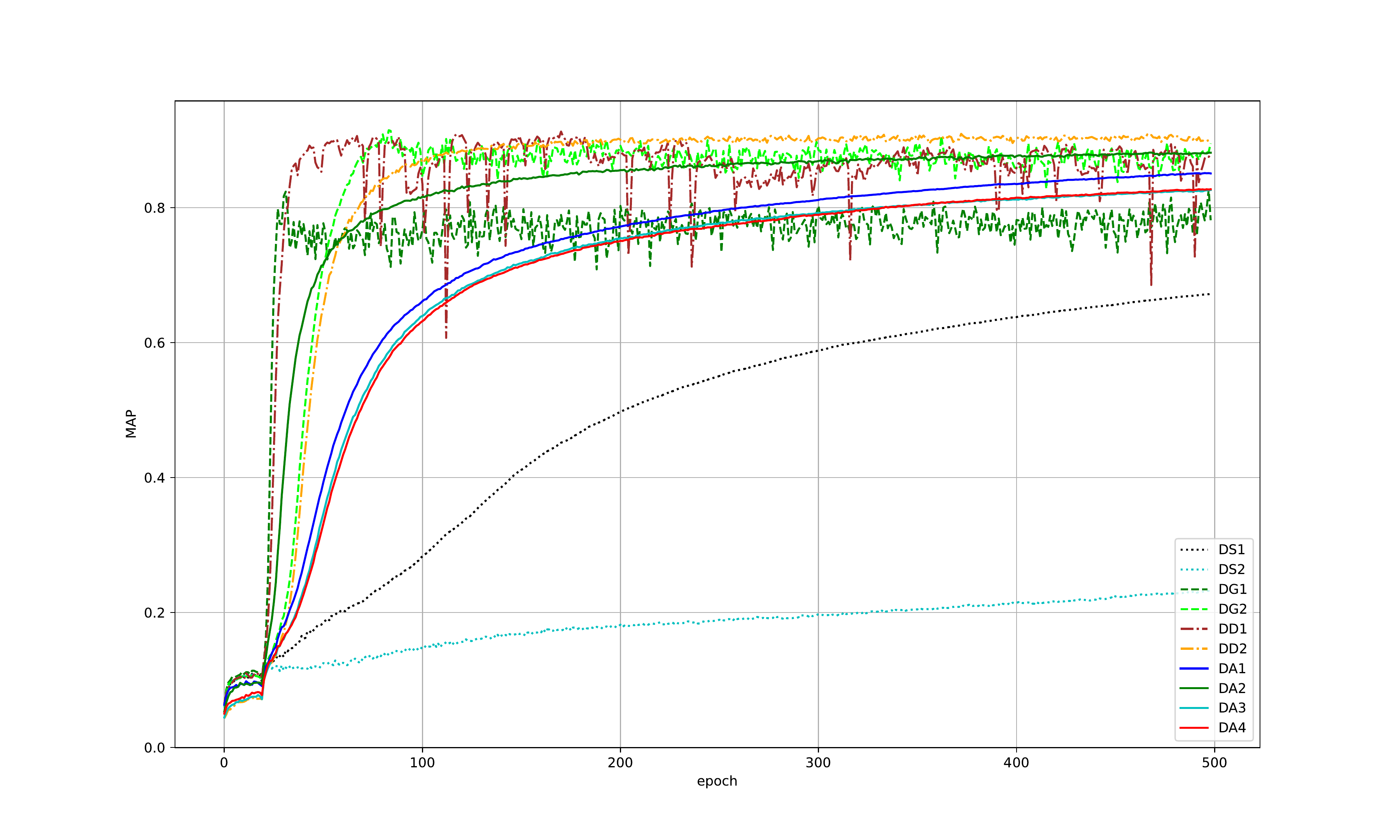}
 \DeclareGraphicsExtensions{.pdf}
\caption{MAP rank versus number of epochs in the case of diminishing learning rates. \label{fig:d-map-ep}}
\end{figure}

\begin{figure}[!t]
\centering
\includegraphics[width=3.0in]{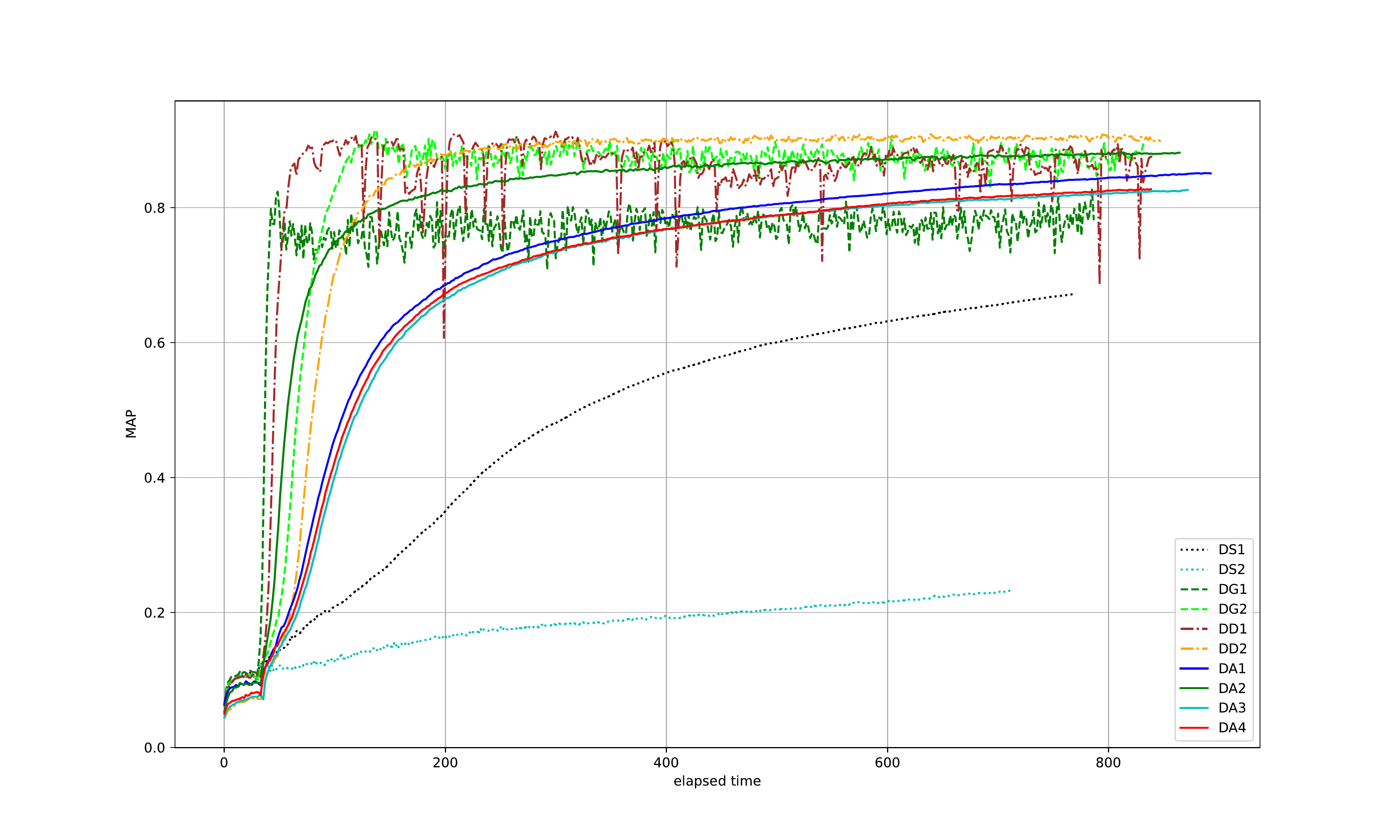}
 \DeclareGraphicsExtensions{.pdf}
\caption{MAP rank versus elapsed time in the case of diminishing learning rates. \label{fig:d-map-el}}
\end{figure}

\subsection{Principal component analysis}
Here, we applied the algorithms to a principal component analysis (PCA) problem. Given $n$ data points $a_1, \cdots ,a_n \in \real^d$, the PCA problem (see \cite{kasai2019riemannian, zhou2019faster}) is formulated as
\begin{align*}
\textrm{minimize }f(U) \quad \textrm{subject to } U \in \stiefel(k,d),
\end{align*}
where
\begin{align*}
f(U):=-\frac{1}{n}\sum_{i=1}^n a_i^\top UU^\top a_i,
\end{align*}
and $\stiefel(k, d) := \{U \in \real^{d \times k} : U^\top U = I_k \}$ denotes the Stiefel manifold. For this problem, we set $N=1$ and $M=\stiefel(k,d)$. Since it is known that parallel transport has no closed-form solution on the Stiefel manifold, we use QR-based retraction and the associated vector transport as an approximation of the exponential map and the parallel transport, respectively (see \cite{absil2008}). The QR-based retraction is defined as
\begin{align*}
R_U(\xi) := \mathrm{qf}(U + \xi),
\end{align*}
where $U \in \stiefel(k,d)$, $\xi \in T_U\stiefel(k,d)$ and $\mathrm{qf}(A)$ denotes the Q factor of the QR decomposition of $A$. Then, the associated vector transport is defined as
\begin{align*}
\mathcal{T}_{U \rightarrow V}(\xi) := \xi - V \mathrm{sym}(V^\top\xi),
\end{align*}
where $U,V \in \stiefel(k,d)$, $\xi \in T_U\stiefel(k,d)$ and $\mathrm{sym}(A):=(A+A^\top)/2$. For this problem, the columns of the optimal solution $U_\ast$ are known to be the top $k$ eigenvectors of the data covariance matrix, which can be estimated using singular value decomposition. The performance of each algorithm in the experiment was judged in terms of the ``optimality gap", that is, $f(U) - f(U_\ast)$. We evaluated the algorithms on the \texttt{MNIST} \footnote{\url{https://keras.io/ja/datasets/}} and \texttt{digits} \footnote{\url{https://scikit-learn.org/stable/auto_examples/datasets/plot_digits_last_image.html}} datasets. The \texttt{MNIST} dataset contains handwritten digits data of 0--9 and has 10000 images of size $28 \times 28$ for testing (see \cite{lecun1998mnist}). For the \texttt{MNIST} dataset, we set $(n,k,d)=(10000,784,10)$. The \texttt{digits} dataset is made up of 1797 $8 \times 8$ handwritten digit images. For the \texttt{digits} dataset, we set $(n,k,d)=(1797,64,8)$.

\subsubsection{Constant learning rate}
First, we compared ten algorithms (CS1)--(CA4) with constant learning rates, same as those used in the experiments on the Poincar{\'e} embeddings. Figs. \ref{fig:mnist-c1}--\ref{fig:mnist-c2} show the numerical results on the \texttt{MNIST} dataset, while Figs. \ref{fig:digits-c1}--\ref{fig:digits-c2} show the numerical results on the \texttt{digits} dataset. Figs. \ref{fig:mnist-c1}--\ref{fig:mnist-c2} indicate that Algorithm \ref{RAMSGrad} performed well in every setting for the \texttt{MNIST} dataset. In particular, the behaviors of (CA2) and (CA4) are the best of all algorithms and the behavior of (CD1) is comparable to them. Moreover, Figs. \ref{fig:digits-c1}--\ref{fig:digits-c2} show that Algorithm \ref{RAMSGrad} outperforms RSGD and RAdaGrad in every setting for the \texttt{digits} dataset. These figures indicate that (CD1), (CA1) and (CA3) eventually made the optimal gap the smallest. Meanwhile, Figs. \ref{fig:mnist-c1}--\ref{fig:digits-c2} show that RAdaGrad often failed to reduce the optimal gap.

\begin{figure}[!t]
\centering
\includegraphics[width=3.0in]{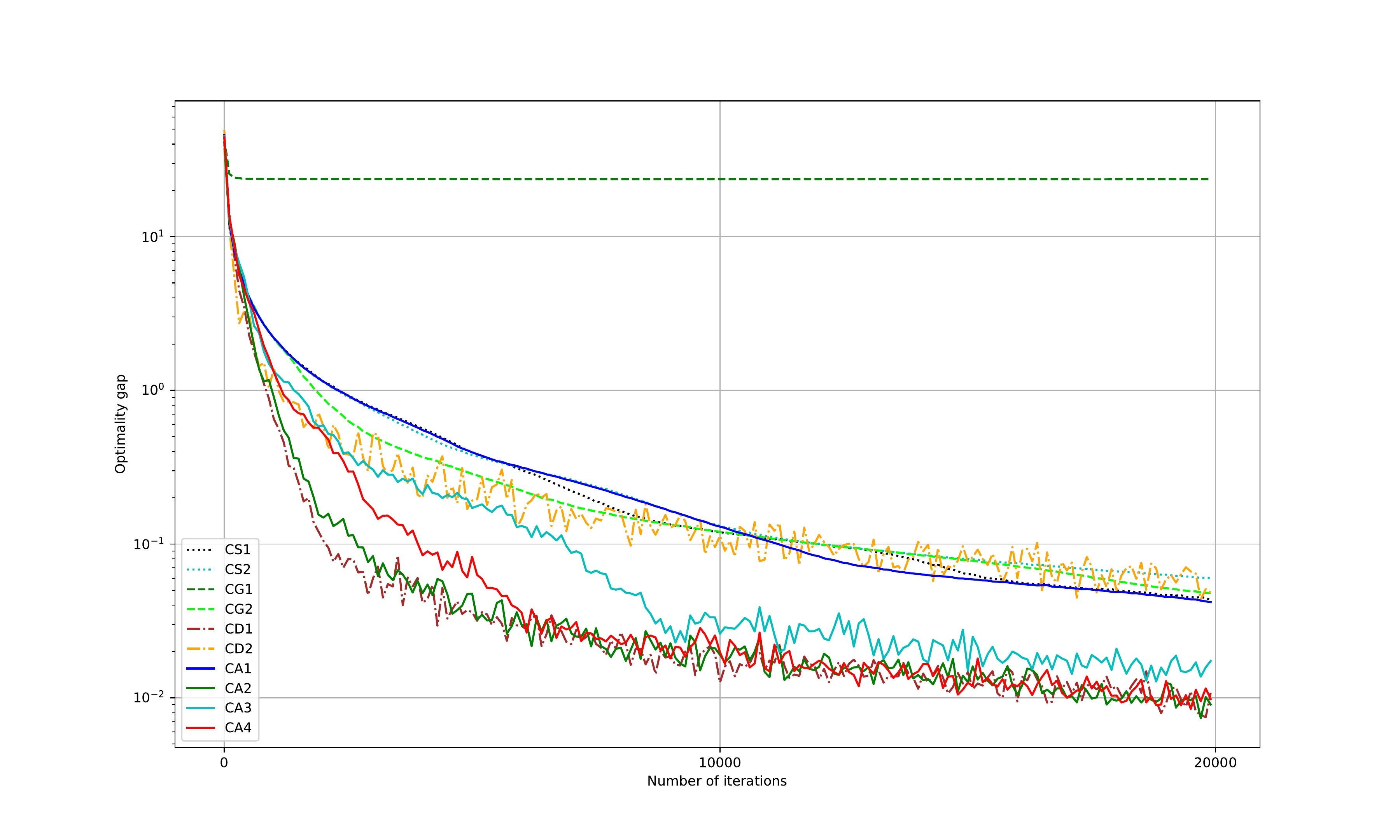}
 \DeclareGraphicsExtensions{.pdf}
\caption{Optimality gap versus number of iterations in the case of constant learning rates for the \texttt{MNIST} dataset \label{fig:mnist-c1}}
\end{figure}

\begin{figure}[!t]
\centering
\includegraphics[width=3.0in]{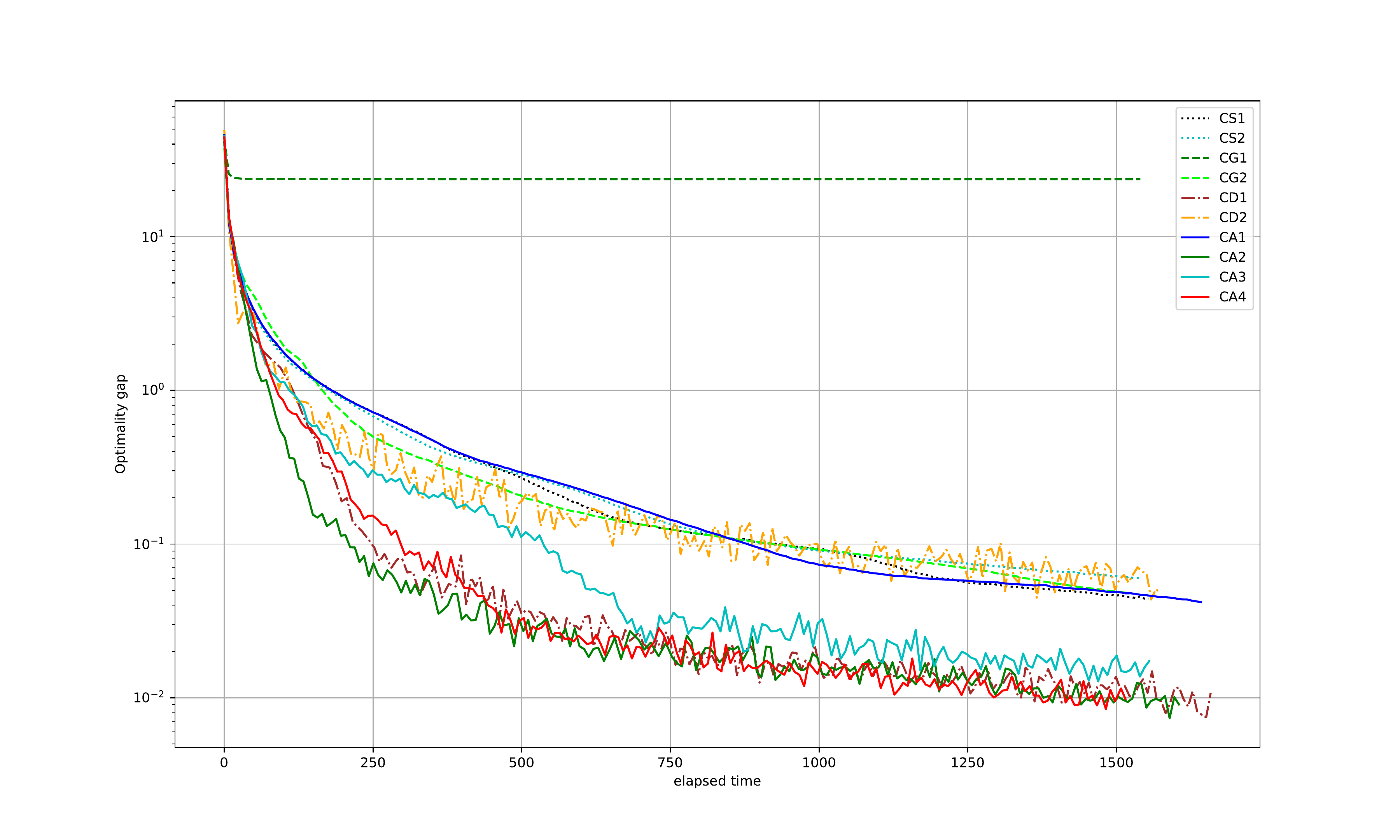}
 \DeclareGraphicsExtensions{.pdf}
\caption{Optimality gap versus elapsed time in the case of constant learning rates for the \texttt{MNIST} dataset \label{fig:mnist-c2}}
\end{figure}

\begin{figure}[!t]
\centering
\includegraphics[width=3.0in]{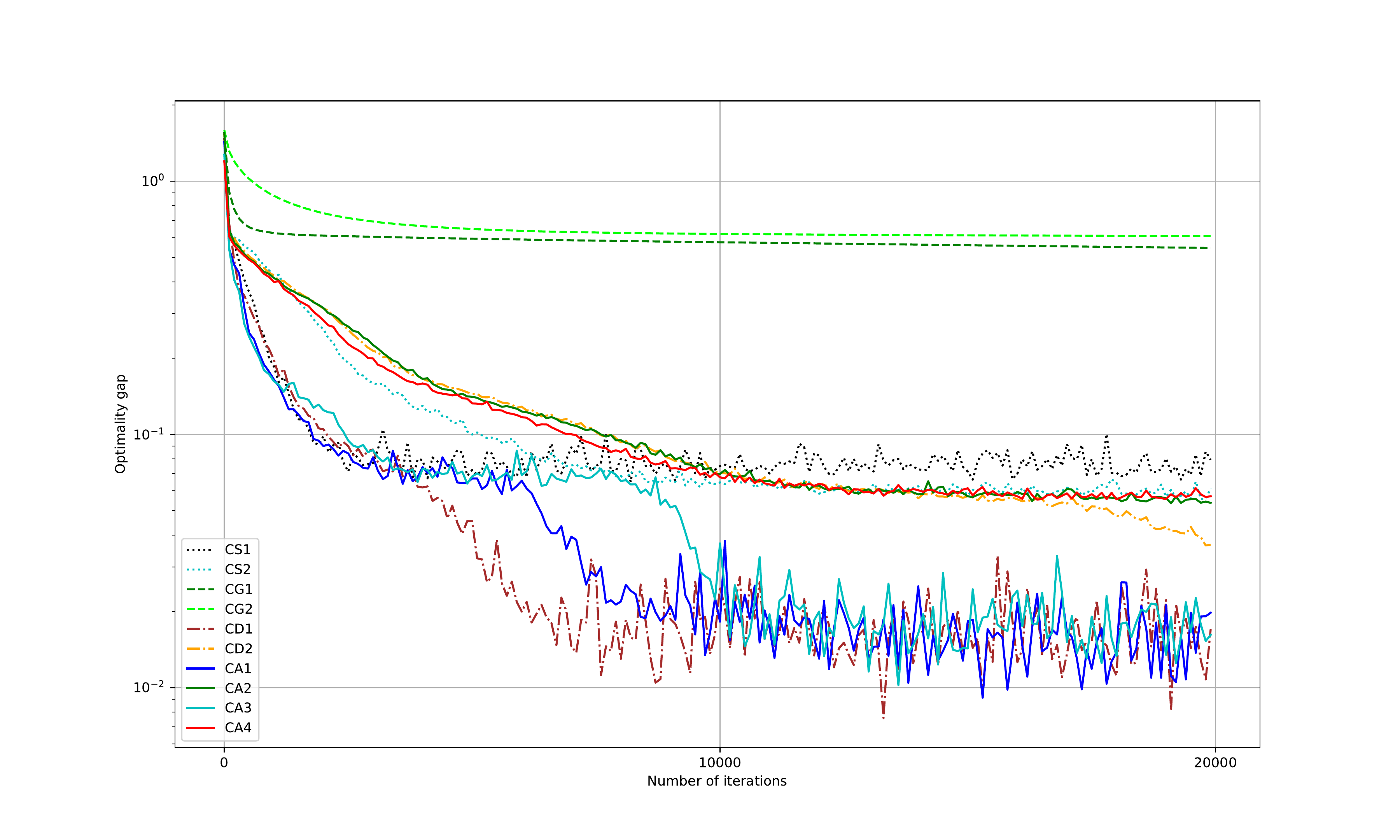}
 \DeclareGraphicsExtensions{.pdf}
\caption{Optimality gap versus number of iterations in the case of constant learning rates for the \texttt{digits} dataset \label{fig:digits-c1}}
\end{figure}

\begin{figure}[!t]
\centering
\includegraphics[width=3.0in]{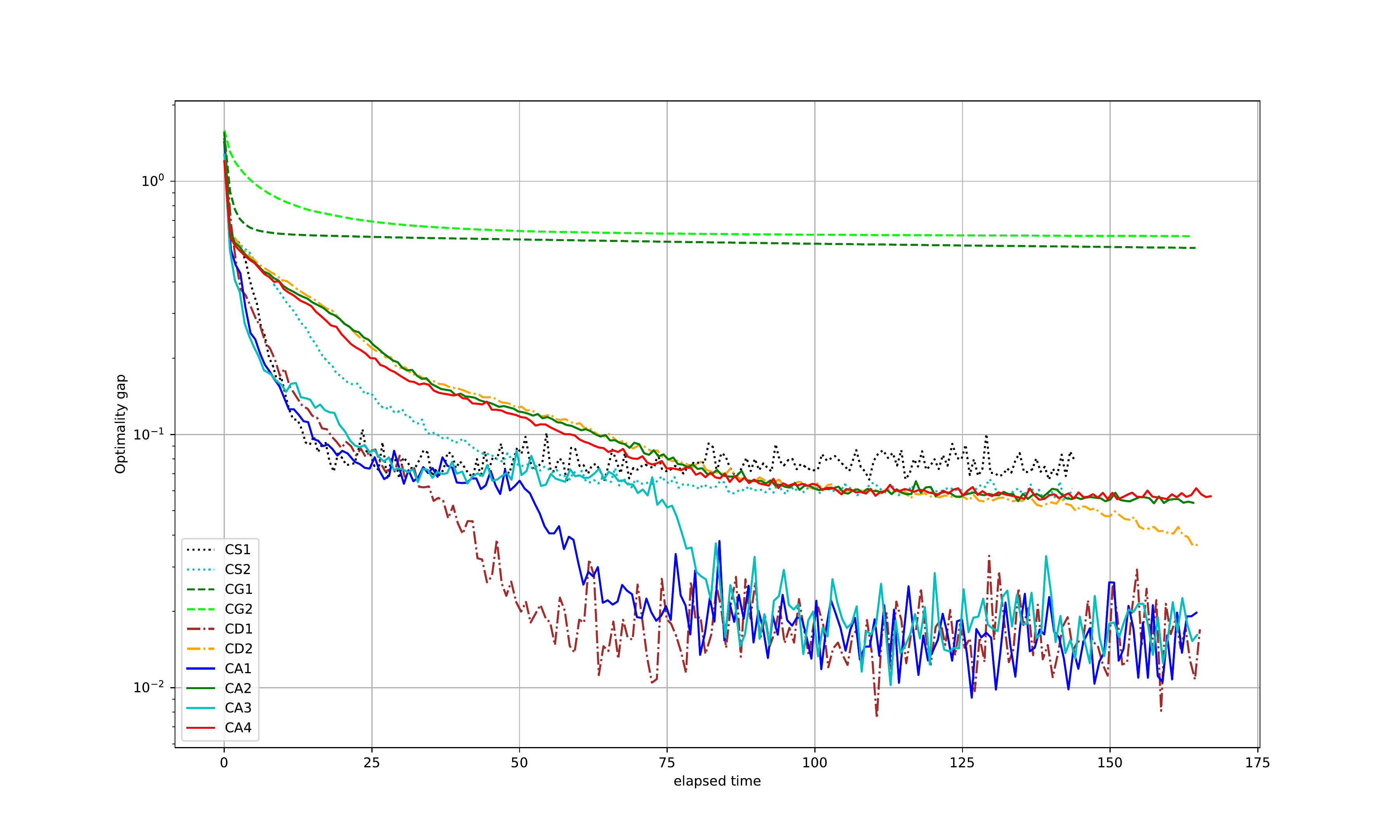}
 \DeclareGraphicsExtensions{.pdf}
\caption{Optimality gap versus elapsed time in the case of constant learning rates for the \texttt{digitis} dataset \label{fig:digits-c2}}
\end{figure}

Moreover, we examined the supervised learning performance. The classification model in this case was the Linear Support Vector Machine\footnote{\url{https://scikit-learn.org/stable/modules/generated/sklearn.svm.SVC.html}} (Linear SVM) provided by the scikit-learn 0.23.2 package. TABLE \ref{tab:C} shows the 5-fold cross validation scores of the \texttt{MNIST} and \texttt{digits} dimensionally reduced by the last point generated by each algorithm with the constant learning rates. This table indicates that the algorithm which sufficiently minimizes the optimality gap  also has high classification accuracy. For the \texttt{MNIST} dataset, since (CG1) does not converge to the optimal solution, its classification accuracy is also bad. Similarly, for the \texttt{digits} dataset, since (CG1) and (CG2) do not minimize the optimality gap, their classification accuracies are also bad.

\begin{table}[htbp]
\centering
\caption{The cross validation scores of the Linear SVM in the case of constant learning rates \label{tab:C}}
\begin{tabular}{|l|ll|}
\hline
 & \texttt{MNIST} & \texttt{digits} \\ \hline
CS1 & 0.8109 & 0.8658 \\
CS2 & 0.8104 & 0.8720 \\
CG1 & 0.5992 & 0.7929 \\
CG2 & 0.8164 & 0.7022 \\
CD1 & 0.7973 & 0.8736 \\
CD2 & 0.8078 & 0.8764 \\
CA1 & 0.8168 & 0.8764 \\
CA2 & 0.8099 & 0.8664 \\
CA3 & 0.7931 & 0.8520 \\
CA4 & 0.8131 & 0.8698 \\ \hline
\end{tabular}
\end{table}

\subsubsection{Diminishing learning rate}
Next, we compared ten algorithms (DS1)--(DA4) with diminishing learning rates, same as those used in the experiments of the Poincar{\'e} embeddings. Figs. \ref{fig:mnist-d1}--\ref{fig:mnist-d2} show the numerical results on the \texttt{MNIST} dataset, while Figs. \ref{fig:digits-d1}--\ref{fig:digits-d2} show the numerical results on the \texttt{digits} dataset. Figs. \ref{fig:mnist-d1}--\ref{fig:mnist-d2} indicate that Algorithm \ref{RAMSGrad} outperforms RSGD and RAdaGrad in every setting for the \texttt{MNIST} dataset. In particular, the behavior of (DD1) is the best of all and (DA1) performs comparably to (DD1). Moreover, Figs. \ref{fig:digits-d1}--\ref{fig:digits-d2} also show that Algorithm \ref{RAMSGrad} outperforms RSGD and RAdaGrad in every setting for the \texttt{digits} dataset. In particular, the behaviors of (DA2) and (DA4) are the best of all algorithms and the behavior of (DD2) is comparable to them. Meanwhile, Figs. \ref{fig:mnist-d1}--\ref{fig:digits-d2} show that RSGD and RAdaGrad often failed to reduce the optimal gap. Figs. 9 and 13 indicate that Algorithm \ref{RAMSGrad} and RAdam with a constant learning rate are superior to the other algorithms for the \texttt{MNIST} dataset. Meanwhile, Figs. 11 and 15 indicate that Algorithm \ref{RAMSGrad} and RAdam with a diminishing learning rate are superior to the other algorithms for the \texttt{digits} dataset.

\begin{figure}[!t]
\centering
\includegraphics[width=3.0in]{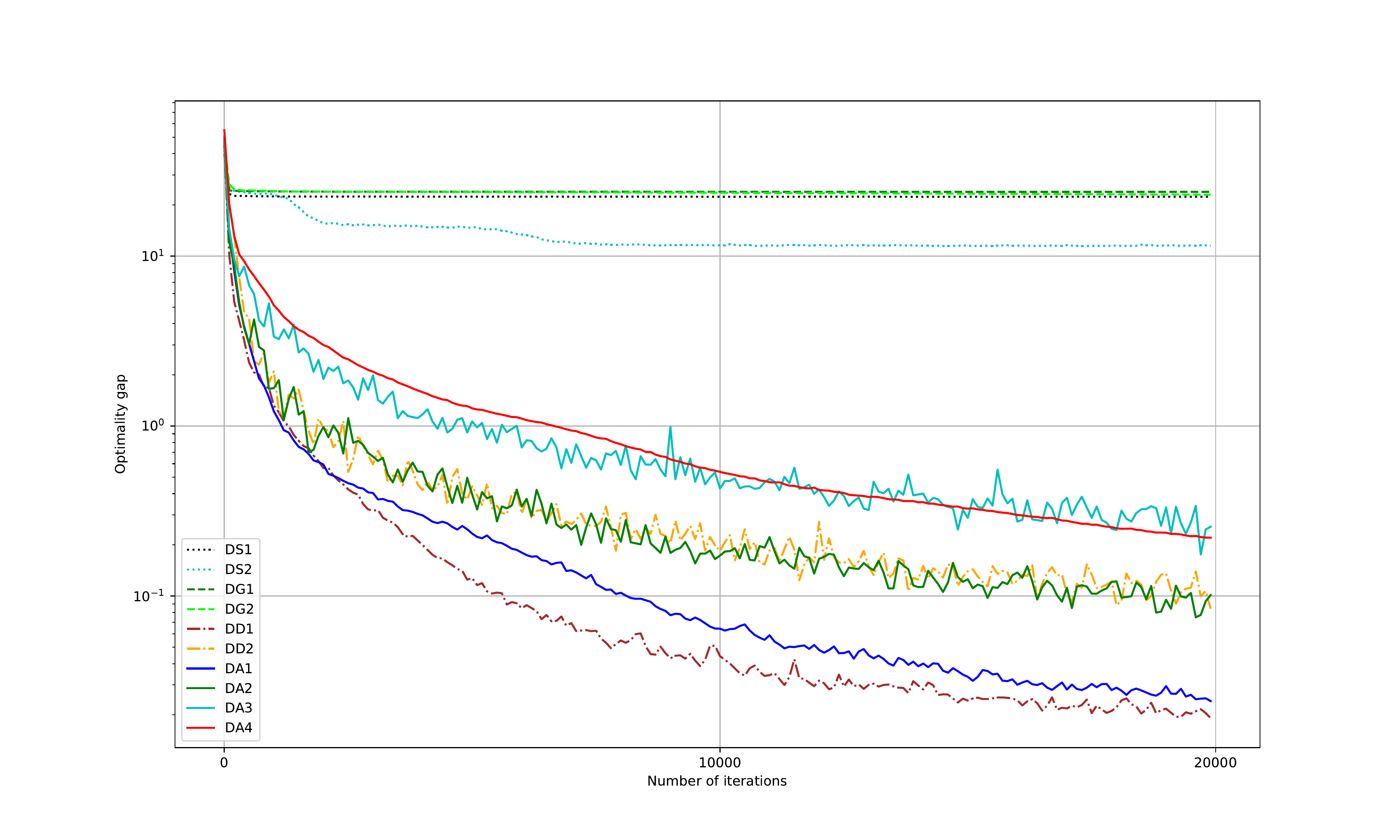}
 \DeclareGraphicsExtensions{.pdf}
\caption{Optimality gap versus number of iterations in the case of diminishing learning rates for the \texttt{MNIST} dataset \label{fig:mnist-d1}}
\end{figure}

\begin{figure}[!t]
\centering
\includegraphics[width=3.0in]{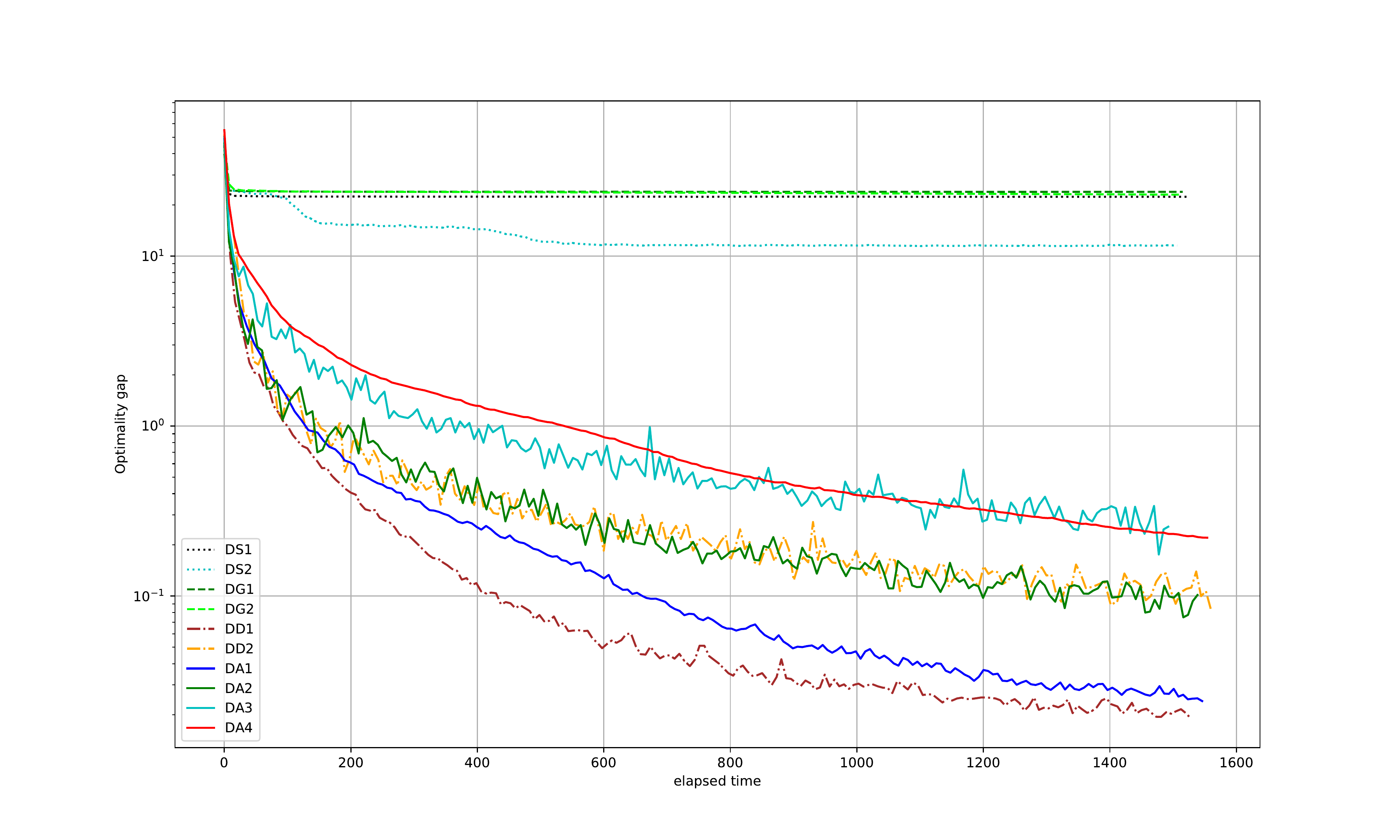}
 \DeclareGraphicsExtensions{.pdf}
\caption{Optimality gap versus elapsed time in the case of diminishing learning rates for the \texttt{MNIST} dataset \label{fig:mnist-d2}}
\end{figure}

\begin{figure}[!t]
\centering
\includegraphics[width=3.0in]{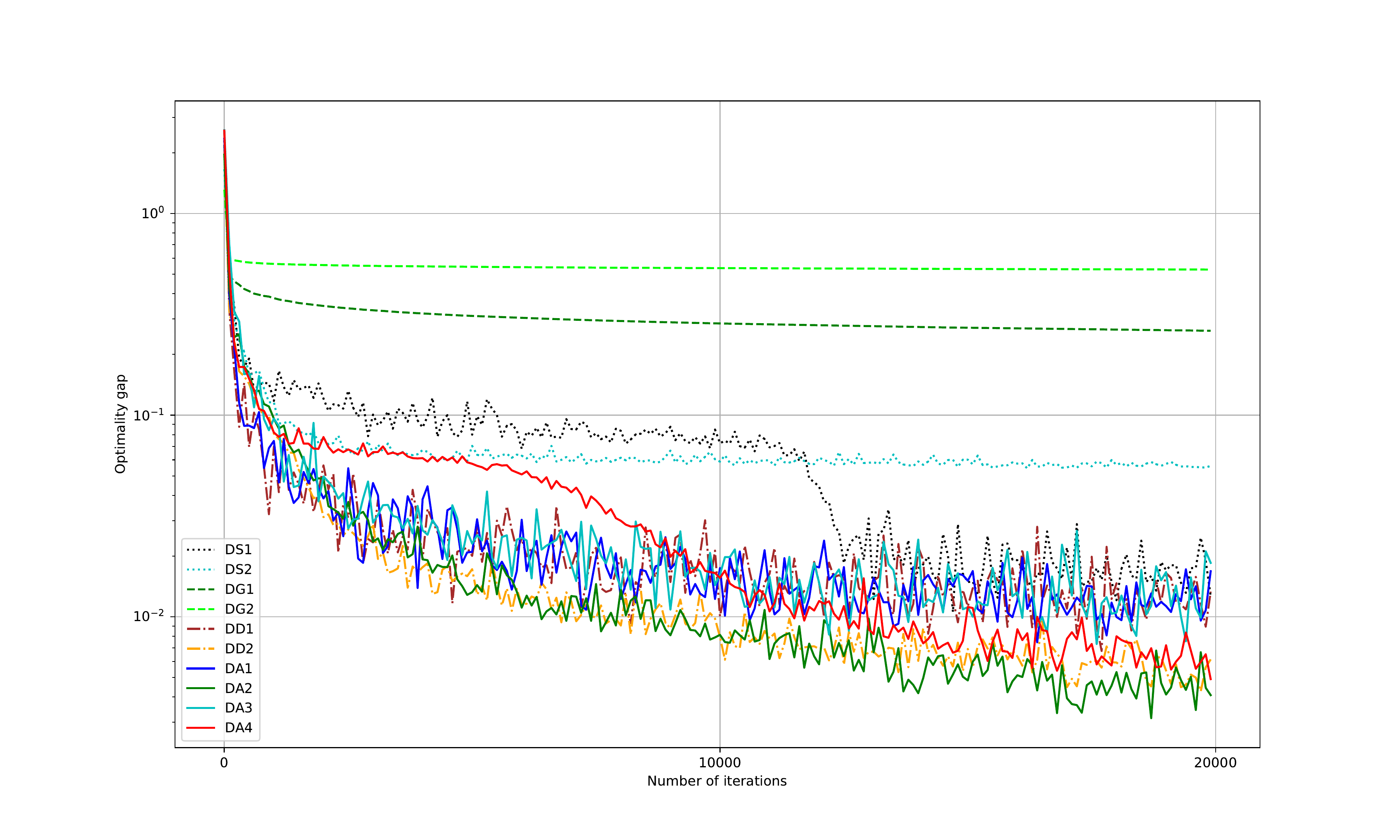}
 \DeclareGraphicsExtensions{.pdf}
\caption{Optimality gap versus number of iterations in the case of diminishing learning rates for the \texttt{digits} dataset \label{fig:digits-d1}}
\end{figure}

\begin{figure}[!t]
\centering
\includegraphics[width=3.0in]{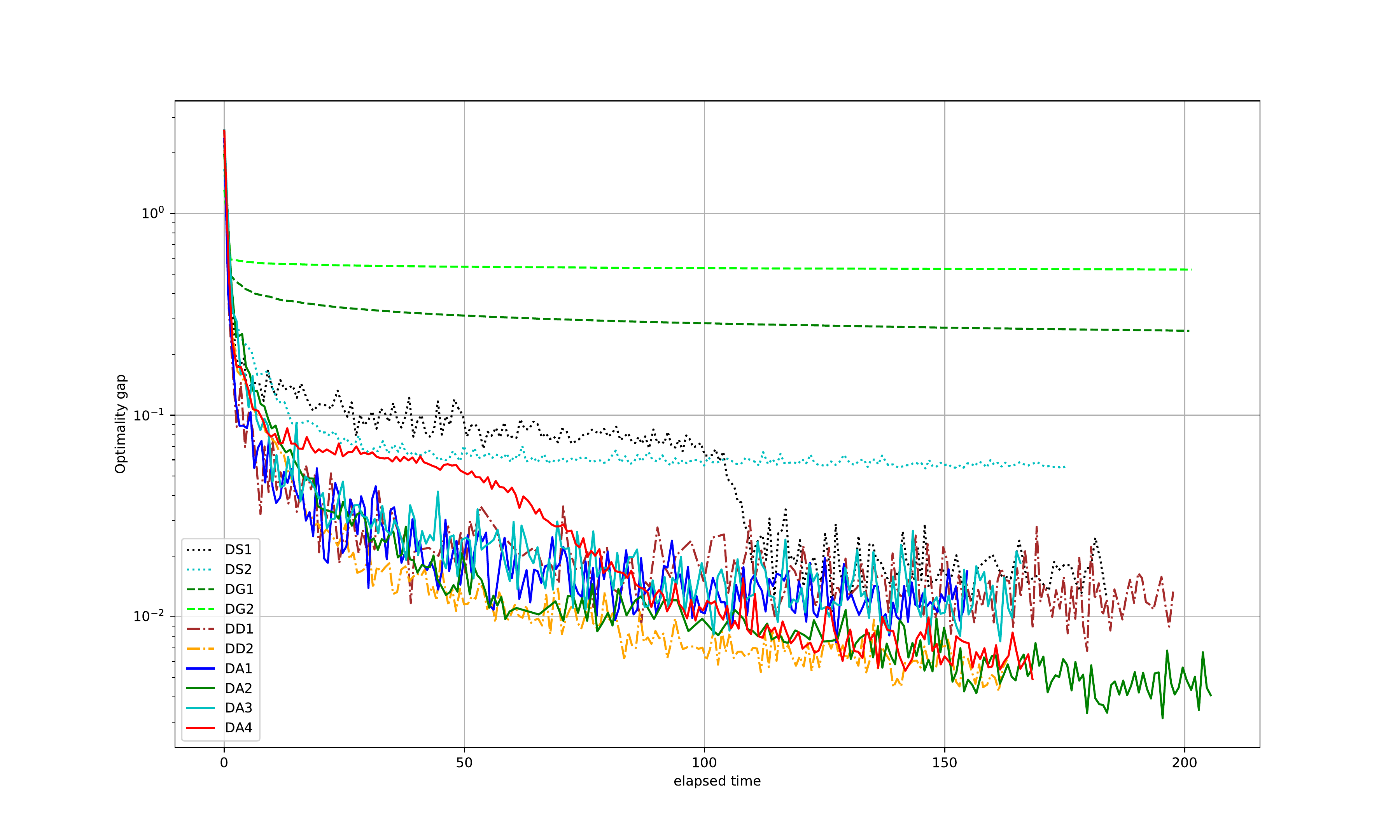}
 \DeclareGraphicsExtensions{.pdf}
\caption{Optimality gap versus elapsed time in the case of diminishing learning rates for the \texttt{digits} dataset \label{fig:digits-d2}}
\end{figure}

As with the constant learning rate, we examined the supervised learning performance of the Linear SVM. TABLE \ref{tab:D} shows the 5-fold cross validation scores of the \texttt{MNIST} and \texttt{digits} dimensionally reduced by the last point generated by each algorithm with the diminishing learning rates. This table indicates that the algorithm which sufficiently minimizes the optimality gap also has high classification accuracy. Since, (DS1), (DS2), (DG1), and (DG2) for the \texttt{MNIST} dataset, and (DG1) and (DG2) for the \texttt{digits} dataset do not minimize the optimality gap, their classification accuracies are also bad.

\begin{table}[htbp]
\centering
\caption{The cross validation scores of the Linear SVM in the case of diminishing learning rates \label{tab:D}}
\begin{tabular}{|l|ll|}
\hline
 & \texttt{MNIST} & \texttt{digits} \\ \hline
DS1 & 0.5719 & 0.8709 \\
DS2 & 0.6332 & 0.8714 \\
DG1 & 0.6546 & 0.7145 \\
DG2 & 0.5904 & 0.6856 \\
DD1 & 0.7955 & 0.8670 \\
DD2 & 0.7928 & 0.8692 \\
DA1 & 0.8133 & 0.8759 \\
DA2 & 0.7922 & 0.8764 \\
DA3 & 0.8239 & 0.8842 \\
DA4 & 0.8061 & 0.8742 \\ \hline
\end{tabular}
\end{table}

\section{Conclusion}
~\label{sec:CaFW}
This paper proposed modified RAMSGrad, a Riemannian adaptive optimization method, and presented its convergence analysis. The proposed algorithm solves the Riemannian optimization problem directly, and it can use both constant and diminishing learning rates. We applied it to Poincar{\'e} embeddings and a PCA problem. The numerical experiments showed that it converges to the optimal solution faster than RSGD and RAdaGrad, and it minimizes the objective function regardless of the initial learning rate. In particular, an experiment showed that the proposed algorithm with a constant learning rate is a good way of embedding the WordNet mammals subtree into a Poincar{\'e} subtree. Moreover, we showed that, in the PCA problem, the choice between using a constant or a diminishing learning rate depends on the dataset.

\section{Acknowledgment}
We are sincerely grateful to the editor and the anonymous referees for helping us improve the original manuscript.

\appendix
\section{Lemmas}
Zhang and Sra developed the following lemma in \cite[Lemma 5]{zhang2016first}.

\begin{lemma}
[Cosine inequality in Alexandrov spaces]~\label{lem:CiAs} Let $a, b, c$ be the sides (i.e., side lengths) of a geodesic triangle in an Alexandrov space whose curvature is bounded by $\kappa < 0$ and $A$ be the angle between sides $b$ and $c$. Then,
\begin{align*}
a^2 \leq \zeta{(\kappa ,c)}b^2 + c^2 - 2bc\cos{(A)},
\end{align*}
where
\begin{align*}
\quad \zeta{(\kappa ,c)} = \frac{\sqrt{|\kappa|c}}{\tanh{(\sqrt{|\kappa|c})}}.
\end{align*}
\end{lemma}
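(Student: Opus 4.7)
The plan is to prove this comparison inequality in two stages: a geometric reduction via Toponogov's comparison theorem to the model hyperbolic plane, followed by an analytic argument in that model.

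For the geometric reduction, I would invoke the hinge version of Toponogov's theorem applicable to Alexandrov spaces of curvature bounded below by $\kappa < 0$: given the hinge $(b, c, A)$ in the given space, construct the comparison hinge with the same data in the model hyperbolic plane $\mathbb{M}^2_\kappa$ of constant sectional curvature $\kappa$, and let $\tilde{a}$ denote its opposite side. Toponogov's theorem then yields $a \leq \tilde{a}$, so it suffices to prove the stated bound with $\tilde{a}$ in place of $a$, reducing the problem entirely to explicit hyperbolic trigonometry.

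For the analytic step in $\mathbb{M}^2_\kappa$, set $\mu := \sqrt{|\kappa|}$ and use the hyperbolic law of cosines
\begin{align*}
\cosh(\mu \tilde{a}) = \cosh(\mu b)\cosh(\mu c) - \sinh(\mu b)\sinh(\mu c)\cos A.
\end{align*}
I would then derive the quadratic bound by parameterizing one side of the hinge: let $\gamma : [0,c] \to \mathbb{M}^2_\kappa$ be the unit-speed geodesic from the vertex $P$ (carrying the angle $A$) toward $Q$, let $R$ be the third vertex at distance $b$ from $P$, and set $\ell(t) := d(R, \gamma(t))$, so that $\ell(0) = b$ and $\ell(c) = \tilde{a}$. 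Implicit differentiation of $\cosh(\mu\ell(t)) = \cosh(\mu b)\cosh(\mu t) - \sinh(\mu b)\sinh(\mu t)\cos A$ produces, for $q(t) := \ell(t)^2$, the initial conditions $q(0) = b^2$ and $q'(0) = -2b\cos A$, together with the identity
\begin{align*}
q''(t) = 2\,\ell'(t)^2 + 2\bigl(1 - \ell'(t)^2\bigr)\,\mu\ell(t)\coth(\mu\ell(t)),
\end{align*}
and the bound $|\ell'(t)| \leq 1$. Integrating $q''$ twice against the initial data and rearranging reduces the goal to showing $q(c) - q(0) - q'(0)c \leq (\psi(c) - 1)b^2 + c^2$ where $\psi(x) := \mu x \coth(\mu x) = \zeta(\kappa, x)$.

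The main obstacle is producing \emph{exactly} the factor $\zeta(\kappa, c) = \psi(c)$ on $b^2$, rather than the coarser $\psi(b+c)$ that a naive use of $\ell(t) \leq b + t$ combined with monotonicity of $\psi$ would yield. I expect this to require working directly from the closed-form law of cosines rather than the ODE: rewriting
\begin{align*}
\cosh(\mu\tilde{a}) - 1 = (\cosh(\mu b) - 1)\cosh(\mu c) + (\cosh(\mu c) - 1) - \sinh(\mu b)\sinh(\mu c)\cos A,
\end{align*}
and then applying the sharp elementary bounds $\cosh(x) - 1 \leq \tfrac{1}{2}x\sinh(x)$ (proved by checking that $f(x) := \tfrac{1}{2}x\sinh(x) - \cosh(x) + 1$ has $f(0) = f'(0) = 0$ and $f''(x) \geq 0$) and $\cosh(\mu\tilde{a}) - 1 \geq \tfrac{1}{2}(\mu\tilde{a})^2$, possibly with casework on the sign of $\cos A$ so that the use of $\sinh(\mu b) \geq \mu b$ on the cross term points in the correct direction. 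The coefficient of $b^2$ then collapses to $\cosh(\mu c)/\bigl(\sinh(\mu c)/(\mu c)\bigr) = \zeta(\kappa, c)$, which is precisely the structure encoded in the law of cosines; together with $a \leq \tilde{a}$ this delivers the claim.
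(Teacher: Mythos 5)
The paper does not actually prove this lemma: it is imported verbatim from \cite[Lemma 5]{zhang2016first}, so your attempt can only be measured against that source and against correctness. Your first stage is right and is how the result is standardly set up: the hinge version of Toponogov's theorem for curvature bounded below by $\kappa<0$ gives $a\le\tilde a$, and your ODE bookkeeping in the model space ($q''=2\ell'^2+2(1-\ell'^2)\mu\ell\coth(\mu\ell)$ with $q(0)=b^2$, $q'(0)=-2b\cos A$) is also correct. As you note yourself, that route only delivers the coefficient $\zeta(\kappa,b+c)$ on $b^2$ rather than $\zeta(\kappa,c)$.

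The genuine gap is that the fallback you propose for recovering the sharp constant cannot work. Your plan is to sandwich
\begin{align*}
\tfrac{1}{2}(\mu\tilde a)^2 \;\le\; \cosh(\mu\tilde a)-1 \;\le\; \tfrac{1}{2}\mu^2\left(\zeta(\kappa,c)\,b^2+c^2-2bc\cos A\right),
\end{align*}
estimating the middle quantity term by term through your identity. But the right-hand inequality is false, already in the degenerate case $b=0$: there $\tilde a=c$ and the required bound reads $\cosh(\mu c)-1\le\tfrac{1}{2}(\mu c)^2$, which fails for every $c>0$. (A nondegenerate instance: $\mu=b=c=1$, $A=\pi/2$ gives middle quantity $\cosh^2(1)-1\approx 1.381$ versus proposed upper bound $\tfrac{1}{2}(\coth(1)+1)\approx 1.157$, even though the lemma itself holds there, $\tilde a^2\approx 2.266\le 2.313$.) The structural reason is that the lemma is an equality at $b=0$ and its two sides agree to first order in $b$ there, so you cannot afford to discard the gap $\cosh(\mu\tilde a)-1-\tfrac{1}{2}(\mu\tilde a)^2\approx(\mu\tilde a)^4/24$, which stays bounded away from zero as $b\to 0$ for fixed $c$; no choice of upper bounds on the three terms of your identity, and no casework on the sign of $\cos A$, can close this. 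Obtaining $\zeta(\kappa,c)$ exactly requires a finer analysis of the law of cosines (for instance, fixing $c$ and $A$ and comparing the two sides of the claimed inequality as functions of $b$, which vanish together with their first derivatives at $b=0$), as in \cite{zhang2016first}. It is worth adding that the weaker constant $\zeta(\kappa,b+c)$ that your ODE argument does prove would in fact suffice everywhere Lemma \ref{lem:CiAs} is used in this paper, since the proof of Theorem \ref{thm:main} only ever evaluates $\zeta(\kappa_i,\cdot)$ at a uniform upper bound on the relevant distances.
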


We will prove the following lemma. All relations between random variables hold almost surely.
\begin{lemma}~\label{lem:boundlem}
Suppose that Assumption \ref{asm:main} (A2) holds. We define $G:=\max_{t \in \mathcal{T},x \in X}\norm{\grad f_t(x)}{x}$. Let $(x_n)_{n \in \nat}$ and $(\hat{v}_n)_{n \in \nat}$ be the sequences generated by Algorithm 1. Then, for all $i \in \{1, 2, \cdots ,N\}$, and $k \in \nat$,
\begin{align}~\label{eq:mbound}
\norm{m_k^i}{x_k^i}^2 \leq G^2,
\end{align}
and
\begin{align}~\label{eq:vbound}
\sqrt{\hat{v}_k^i} \leq G.
\end{align}
\end{lemma}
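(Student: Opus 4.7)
Both inequalities fall out of straightforward inductions on $k$ once the convex-combination structure of the updates is exposed. The key observation, used only in the first part, is that for any $x^i,y^i\in M_i$ the map $\varphi^i_{x^i\to y^i}$ is an isometry $T_{x^i}M_i\to T_{y^i}M_i$, so parallel transport preserves norms. The uniform bound $\norm{\grad f_{t_n}(x_n)^i}{x_n^i}\le G$ is supplied by Assumption \ref{asm:main}(A1), which guarantees $G<\infty$.

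For \eqref{eq:mbound}, I would induct on $k$. The base case $k=0$ is immediate from the initialization $m_0=0$. Assume $\norm{m_{k-1}^i}{x_{k-1}^i}\le G$. Since $\tau_{k-1}^i = \varphi^i_{x_{k-1}^i\to x_k^i}(m_{k-1}^i)$ and $\varphi^i$ is an isometry, $\norm{\tau_{k-1}^i}{x_k^i} = \norm{m_{k-1}^i}{x_{k-1}^i}\le G$. Both $\tau_{k-1}^i$ and $g_{t_k}^i$ lie in $T_{x_k^i}M_i$, so the triangle inequality applied to $m_k^i=\beta_{1k}\tau_{k-1}^i+(1-\beta_{1k})g_{t_k}^i$ yields
\begin{equation*}
\norm{m_k^i}{x_k^i} \;\le\; \beta_{1k}\norm{\tau_{k-1}^i}{x_k^i} + (1-\beta_{1k})\norm{g_{t_k}^i}{x_k^i} \;\le\; \beta_{1k}G + (1-\beta_{1k})G \;=\; G,
\end{equation*}
which closes the induction and gives \eqref{eq:mbound} after squaring.

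For \eqref{eq:vbound}, I would first show $v_k^i\le G^2$ by induction. The base case follows from $v_0^i=0$, and the inductive step uses the convex-combination update $v_k^i=\beta_2 v_{k-1}^i+(1-\beta_2)\norm{g_{t_k}^i}{x_k^i}^2\le \beta_2 G^2+(1-\beta_2)G^2=G^2$. The bound then transfers to $\hat v_k^i$ via $\hat v_k^i=\max\{\hat v_{k-1}^i,v_k^i\}$, interpreting the $\epsilon>0$ perturbation as the small safeguard that guarantees $\sqrt{\hat v_k^i}>0$ (its contribution is absorbed into $G$ in the sense used throughout the analysis). Taking square roots gives \eqref{eq:vbound}.

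No step is genuinely delicate: the only point that requires care is remembering that $m_{k-1}^i$ lives in $T_{x_{k-1}^i}M_i$ while the update computing $m_k^i$ takes place in $T_{x_k^i}M_i$, which is exactly why the isometry property of $\varphi^i$ must be invoked before the triangle inequality can be applied. Everything else is monotone bookkeeping on convex combinations.
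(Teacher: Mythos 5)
Your proof is correct and follows essentially the same route as the paper's: induction on $k$, the isometry of $\varphi^i_{x_{k-1}^i \to x_k^i}$ to carry the bound on $m_{k-1}^i$ into $T_{x_k^i}M_i$, and the convex-combination structure of the updates (the paper bounds $\norm{m_k^i}{x_k^i}^2$ directly via convexity of the squared norm where you apply the triangle inequality and then square --- an immaterial difference --- and it simply asserts that \eqref{eq:vbound} follows ``in the same way'' where you write the $v_k^i$ induction out). One caveat that applies equally to the paper's argument and to yours: since the algorithm sets $\hat{v}_k^i = \max\{\hat{v}_{k-1}^i, v_k^i\} + \epsilon$, the $\epsilon$'s accumulate across iterations, so $\sqrt{\hat{v}_k^i} \leq G$ holds only up to this additive perturbation; you at least flag this explicitly, whereas the paper passes over it in silence.
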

\begin{proof}
First, we consider \eqref{eq:mbound}. The proof is by induction. For $k=1$, from the convexity of $\norm{\cdot}{x_1^i}^2$, we have
\begin{align*}
\norm{m_1^i}{x_1^i}^2 &\leq \norm{\beta_{11}\varphi_{x_{0}^i \rightarrow x_{1}^i}^i(m_0^i)+(1-\beta_{11})g_{t_1}^i}{x_1^i}^2 \\
&\leq \beta_{11}\norm{\varphi_{x_{0}^i \rightarrow x_{1}^i}^i(m_0^i)}{x_1^i}^2 + (1-\beta_{11})\norm{g_{t_1}^i}{x_1^i}^2 \\
&=(1-\beta_{11})\norm{g_{t_1}^i}{x_1^i}^2 \\
&\leq \norm{g_{t_1}^i}{x_1^i}^2 \\
&\leq G^2,
\end{align*}
where we have used $0 \leq \beta_{11} < 1$ and $\norm{g_{t_1}^i}{x_1^i} \leq G$. Suppose that $\norm{m_{k-1}^i}{x_{k-1}^i}^2 \leq G^2$. The convexity of $\norm{\cdot}{x_k^i}^2$, together with the definition of $m_k^i$, and $\norm{g_{t_k}^i}{x_k^i} \leq G$, guarantees that,
\begin{align*}
\norm{m_k^i}{x_k^i}^2 &\leq \beta_{1k}\norm{\varphi_{x_{k-1}^i \rightarrow x_{k}^i}^i(m_{k-1}^i)}{x_k^i}^2 + (1-\beta_{1k})\norm{g_{t_k}^i}{x_k^i}^2 \\
&\leq \beta_{1k}\norm{m_{k-1}^i}{x_{k-1}^i}^2 + (1-\beta_{1k})G^2 \\
&\leq \beta_{1k}G^2 + (1-\beta_{1k})G^2 \\
&= G^2.
\end{align*}
Thus, induction ensures that, for all $k \in \nat$,
\begin{align*}
\norm{m_k^i}{x_k^i}^2 \leq G^2.
\end{align*}
\eqref{eq:vbound} can be proven in same way as \eqref{eq:mbound}.
\end{proof}

\section{Proof of Theorem \ref{thm:main}}
~\label{sec:apendpt}
\begin{proof}
[Proof of Theorem \ref{thm:main}] Note that
\begin{align*}
y_{k+1}^i:=\exp_{x_k^i}^i\left(-\alpha_k\dfrac{m_k^i}{\sqrt{\hat{v}_k^i}}\right).
\end{align*}
Thus, we will consider a geodesic triangle consisting of three points $x_k^i$, $x_\ast^i$, and $y_{k+1}^i$. Let the length of each side be $a$, $b$, and $c$, respectively, such that
\begin{align}~\label{eq:abc}
\begin{cases}
a := d^i(y_{k+1}^i,x_\ast^i) \\
b := d^i(y_{k+1}^i,x_k^i) \\
c := d^i(x_k^i,x_\ast^i)
\end{cases}.
\end{align}
It follows that
\begin{align*}
\cos{(\angle y_{k+1}^ix_k^ix_\ast^i)} &:= \frac{\ip{\log_{x_k^i}^i(y_{k+1}^i)}{\log_{x_k^i}^i(x_\ast^i)}{x_k^i}}{\norm{\log_{x_k^i}^i(y_{k+1}^i)}{x_k^i}\norm{\log_{x_k^i}^i(x_\ast^i)}{x_k^i}} \\
&= \frac{\ip{-\alpha_k\dfrac{m_k^i}{\sqrt{\hat{v}_k^i}}}{\log_{x_k^i}^i(x_\ast^i)}{x_k^i}}{d^i(y_{k+1}^i,x_k^i)d^i(x_k^i,x_\ast^i)}.
\end{align*}
Using Lemma \ref{lem:CiAs} with \eqref{eq:abc} and the definition of $\Pi_{X_i}$, we have
\begin{align*}
&d^i(x_{k+1}^i,x_\ast^i)^2 \\
&\leq d^i(y_{k+1}^i,x_\ast^i)^2 \\
&\leq \zeta(\kappa^i,d^i(x_k^i,x_\ast^i))d^i(y_{k+1}^i,x_k^i)^2 + d^i(x_k^i,x_\ast^i)^2 \\
&\quad -2d^i(y_{k+1}^i,x_k^i)d^i(x_k^i,x_\ast^i)\frac{\ip{-\alpha_k\dfrac{m_k^i}{\sqrt{\hat{v}_k^i}}}{\log_{x_k^i}^i(x_\ast^i)}{x_k^i}}{d^i(y_{k+1}^i,x_k^i)d^i(x_k^i,x_\ast^i)},
\end{align*}
which, together with the definition of $y_{k+1}^i$, implies that
\begin{align*}
\ip{-m_k^i}{\log_{x_k^i}^i(x_\ast^i)}{x_k^i} &\leq \dfrac{\sqrt{\hat{v}_k^i}}{2\alpha_k}(d^i(x_k^i,x_\ast^i)^2 - d^i(x_{k+1}^i,x_\ast^i)^2) \\
&\quad + \zeta(\kappa^i,d^i(x_k^i,x_\ast^i))\dfrac{\alpha_k}{2\sqrt{\hat{v}_k^i}}\norm{m_k^i}{x_k^i}^2.
\end{align*}
Plugging $m_k^i=\beta_{1k}\varphi_{x_{k-1}^i \rightarrow x_{k}^i}^i(m_{k-1}^i)+(1-\beta_{1k})g_{t_k}^i$ into the above inequality and using (A1), we obtain
\begin{align}
\begin{split}~\label{eq:convexineq}
&\ip{-g_{t_k}^i}{\log_{x_k^i}(x_\ast^i)}{x_k^i} \\
&\leq \dfrac{\sqrt{\hat{v}_k^i}}{2\alpha_k(1-\beta_{1k})}\left(d^i(x_k^i,x_\ast^i)^2 - d^i(x_{k+1}^i,x_\ast^i)^2\right) \\
&\quad + \dfrac{\zeta(\kappa^i,D)}{2(1-\beta_{1k})}\dfrac{\alpha_k}{\sqrt{\hat{v}_k^i}}\norm{m_k^i}{x_k^i}^2 \\
&\quad + \frac{\beta_{1k}}{1-\beta_{1k}}\ip{\varphi_{x_{k-1}^i \rightarrow x_{k}^i}^i(m_{k-1}^i)}{\log_{x_k^i}(x_\ast^i)}{x_k^i}.
\end{split}
\end{align}
Since (A2) implies that $f$ is geodesically convex with $g(x)=\left(g^i(x^i)\right):=\grad{f(x)}$, we have
\begin{align*}
f(x_k) - f(x_\ast) &\leq \ip{-g(x_k)}{\log_{x_k}(x_\ast)}{x_k} \\
&= \sum_{i=1}^N\ip{-g^i(x_k^i)}{\log_{x_k^i}^i(x_\ast^i)}{x_k^i}.
\end{align*}
Summing the above equality from $k = 1$ to $n$, we obtain
\begin{align}~\label{eq:temp1}
\frac{1}{n}\sum_{k=1}^nf(x_k) - f(x_\ast) \leq \frac{1}{n}\sum_{k=1}^n\sum_{i=1}^N\ip{-g^i(x_k^i)}{\log_{x_k^i}^i(x_\ast^i)}{x_k^i}.
\end{align}
Furthermore, the linearity of the Riemannian gradient ensures that
\begin{align*}
&\expect{}{\ip{-g_{t_k}^i}{\log_{x_k^i}^i(x_\ast^i)}{x_k^i}} \\
&= \expect{}{\mathbb{E}\left[ \ip{-g_{t_k}^i}{\log_{x_k^i}^i(x_\ast^i)}{x_k^i} \relmiddle| t_{[k-1]} \right]} \\
&= \expect{}{\ip{-\mathbb{E}\left[ g_{t_k}^i\relmiddle| t_{[k-1]} \right]}{\log_{x_k^i}^i(x_\ast^i)}{x_k^i}} \\
&= \expect{}{\ip{-g^i(x_k^i)}{\log_{x_k^i}^i(x_\ast^i)}{x_k^i}},
\end{align*}
which, together with \eqref{eq:temp1}, implies that
\begin{align*}
& \expect{}{\frac{1}{n}\sum_{k=1}^nf(x_k) - f(x_\ast)} \\
&\leq \frac{1}{n}\expect{}{\sum_{k=1}^n\sum_{i=1}^N\ip{-g^i(x_k^i)}{\log_{x_k^i}^i(x_\ast^i)}{x_k^i}} \nonumber \\
&= \frac{1}{n}\expect{}{\sum_{k=1}^n\sum_{i=1}^N\ip{-g_{t_k}^i}{\log_{x_k^i}^i(x_\ast^i)}{x_k^i}}.
\end{align*}
From \eqref{eq:convexineq} and the above inequality, we have
\begin{align}
\begin{split}~\label{eq:exdif}
&\expect{}{\frac{1}{n}\sum_{k=1}^nf(x_k) - f(x_\ast)} \\
&\leq \frac{1}{n}\expect{}{\sum_{k=1}^n\sum_{i=1}^N\dfrac{\sqrt{\hat{v}_k^i}}{2\alpha_k(1-\beta_{1k})}\left(d^i(x_k^i,x_\ast^i)^2 - d^i(x_{k+1}^i,x_\ast^i)^2\right)} \\
&\quad + \frac{1}{n}\expect{}{\sum_{k=1}^n\sum_{i=1}^N\dfrac{\zeta(\kappa^i,D)}{2(1-\beta_{1k})}\dfrac{\alpha_k}{\sqrt{\hat{v}_k^i}}\norm{m_k^i}{x_k^i}^2} \\
&\quad + \frac{1}{n}\expect{}{\sum_{k=1}^n\sum_{i=1}^N\frac{\beta_{1k}}{1-\beta_{1k}}\ip{\varphi_{x_{k-1}^i \rightarrow x_{k}^i}^i(m_{k-1}^i)}{\log_{x_k^i}(x_\ast^i)}{x_k^i}}.
\end{split}
\end{align}

Here, let us consider the first term of the left-hand side of \eqref{eq:exdif}. We note that from the assumption for all $k \in \nat$, $\alpha_{k}(1-\beta_{1k}) \leq \alpha_{k-1}(1-\beta_{1,k-1})$, and $\beta_{1k} \leq \beta_{1,k-1}$,
\begin{align*}
\alpha_{k}(1-\beta_{1k}) \leq \alpha_{k-1}(1-\beta_{1,k-1}) \leq \alpha_{k-1}(1-\beta_{1k}),
\end{align*}
which implies $\alpha_k \leq \alpha_{k-1}$. Using $\beta_{1k} \leq \beta_{11}$, $\alpha_k \leq \alpha_{k-1}$, $\sqrt{\hat{v}_{k}^i} \geq \sqrt{\hat{v}_{k-1}^i}$, and $\alpha_{k}(1-\beta_{1k}) \leq \alpha_{k-1}(1-\beta_{1,k-1})$ for all $k \in \nat$, together with (A1), we have that
\begin{align*}
&\sum_{k=1}^n\sum_{i=1}^N\dfrac{\sqrt{\hat{v}_k^i}}{2\alpha_k(1-\beta_{1k})}(d^i(x_k^i,x_\ast^i)^2 - d^i(x_{k+1}^i,x_\ast^i)^2) \\
&\leq \frac{1}{2(1-\beta_{11})}\sum_{i=1}^N\left[\sum_{k=2}^n\left(\dfrac{\sqrt{\hat{v}_k^i}}{\alpha_k}-\dfrac{\sqrt{\hat{v}_{k-1}^i}}{\alpha_{k-1}}\right)d^i(x_k^i,x_\ast^i)^2 \right. \\
&\left. \quad +\dfrac{\sqrt{\hat{v}_1^i}}{\alpha_1}d^i(x_1^i,x_\ast^i)^2\right] \\
&\leq \frac{1}{2(1-\beta_{11})}\sum_{i=1}^N\left[\sum_{k=2}^n\left(\dfrac{\sqrt{\hat{v}_k^i}}{\alpha_k}-\dfrac{\sqrt{\hat{v}_{k-1}^i}}{\alpha_{k-1}}\right)D^2+\dfrac{\sqrt{\hat{v}_1^i}}{\alpha_1}D^2\right] \\
&= \frac{D^2}{2(1-\beta_{11})}\sum_{i=1}^N\dfrac{\sqrt{\hat{v}_n^i}}{\alpha_n} \\
&\leq \frac{NGD^2}{2\alpha_n(1-\beta_{11})},
\end{align*}
where the last inequality is guaranteed by Lemma \ref{lem:boundlem}. Namely,
\begin{align}~\label{eq:1t}
\begin{split}
&\expect{}{\sum_{k=1}^n\sum_{i=1}^N\dfrac{\sqrt{\hat{v}_k^i}}{2\alpha_k(1-\beta_{1k})}(d^i(x_k^i,x_\ast^i)^2 - d^i(x_{k+1}^i,x_\ast^i)^2)} \\
&\leq \dfrac{NGD^2}{2\alpha_n(1-\beta_{11})}.
\end{split}
\end{align}
Next, let us consider the second term of the left-hand side of \eqref{eq:exdif}. From $\sqrt{\epsilon} \leq \sqrt{\hat{v}_k^i}$ and Lemma \ref{lem:boundlem}, we have
\begin{align*}
&\sum_{k=1}^n\sum_{i=1}^N\dfrac{\zeta(\kappa^i,D)}{2(1-\beta_{1k})}\dfrac{\alpha_k}{\sqrt{\hat{v}_k^i}}\norm{m_k^i}{x_k^i}^2 \\
&\leq \dfrac{G^2}{2\sqrt{\epsilon}(1-\beta_{11})}\sum_{i=1}^N\zeta(\kappa_i,D)\sum_{k=1}^n\alpha_k.
\end{align*}
Namely,
\begin{align}~\label{eq:2t}
\begin{split}
&\expect{}{\sum_{k=1}^n\sum_{i=1}^N\dfrac{\zeta(\kappa^i,D)}{2(1-\beta_{1k})}\dfrac{\alpha_k}{\sqrt{\hat{v}_k^i}}\norm{m_k^i}{x_k^i}^2} \\
&\leq \dfrac{G^2}{2\sqrt{\epsilon}(1-\beta_{11})}\sum_{i=1}^N\zeta(\kappa_i,D)\sum_{k=1}^n\alpha_k.
\end{split}
\end{align}
Now, let us consider the third term of the left-hand side of \eqref{eq:exdif}. Applying the Cauchy-Schwarz inequality to the term and using (A1) and Lemma \ref{lem:boundlem}, it follows that
\begin{align*}
&\sum_{k=1}^n\sum_{i=1}^N\frac{\beta_{1k}}{1-\beta_{1k}}\ip{\varphi_{x_{k-1}^i \rightarrow x_{k}^i}^i(m_{k-1}^i)}{\log_{x_k^i}(x_\ast^i)}{x_k^i} \\
&\leq \sum_{k=1}^n\sum_{i=1}^N\frac{\beta_{1k}}{1-\beta_{1k}}\norm{\varphi_{x_{k-1}^i \rightarrow x_{k}^i}^i(m_{k-1}^i)}{x_k^i} \norm{\log_{x_k^i}(x_\ast^i)}{x_k^i} \\
&\leq \frac{NGD}{1-\beta_{11}}\sum_{k=1}^n\beta_{1k}.
\end{align*}
Namely,
\begin{align}~\label{eq:3t}
\begin{split}
&\expect{}{\sum_{k=1}^n\sum_{i=1}^N\frac{\beta_{1k}}{1-\beta_{1k}}\ip{\varphi_{x_{k-1}^i \rightarrow x_{k}^i}^i(m_{k-1}^i)}{\log_{x_k^i}(x_\ast^i)}{x_k^i}} \\
&\leq \frac{NGD}{1-\beta_{11}}\sum_{k=1}^n\beta_{1k}.
\end{split}
\end{align}
Finally, together with \eqref{eq:exdif}, \eqref{eq:1t}, \eqref{eq:2t}, and \eqref{eq:3t}, we have
\begin{align*}
&\expect{}{\frac{1}{n}\sum_{k=1}^nf(x_k) - f(x_\ast)} \\
&\leq \dfrac{NGD^2}{2(1-\beta_{11})}\frac{1}{n\alpha_n} + \dfrac{G^2}{2\sqrt{\epsilon}(1-\beta_{11})}\sum_{i=1}^N\zeta(\kappa_i,D)\frac{1}{n}\sum_{k=1}^n\alpha_k \\
&\quad +\frac{NGD}{1-\beta_{11}}\frac{1}{n}\sum_{k=1}^n\beta_{1k}.
\end{align*}
This complete the proof.
\end{proof}

\section{Proof of Corollary \ref{cor:CLR} and \ref{cor:DLR}}
~\label{sec:apendpc}
\begin{proof}
[Proof of Corollary \ref{cor:CLR}] The learning rates $\alpha_{n}:=\alpha$ and $\beta_{1n} := \beta$ satisfy for all $n \in \nat$, $\beta_{1n} \leq \beta_{1,n-1}$ and $\alpha_n(1-\beta_{1n}) \leq \alpha_{n-1}(1-\beta_{1,n-1})$. Let us define
\begin{align*}
C_1 := \dfrac{G^2}{\sqrt{\epsilon}(1-\beta_{11})}\sum_{i=1}^N\zeta(\kappa_i,D) > 0,
\end{align*}
and
\begin{align*}
C_2:= \frac{NGD}{1-\beta_{11}}.
\end{align*}
Using the definitions of $C_1$ and $C_2$, \eqref{eq:main} can be written as
\begin{align*}
\expect{}{\frac{1}{n}\sum_{k=1}^nf(x_k) - f(x_\ast)} \leq \dfrac{NGD^2}{2\alpha(1-\beta_{11})}\frac{1}{n} + C_1\alpha + C_2\beta.
\end{align*}
This complete the proof.
\end{proof}

\begin{proof}
[Proof of Corollary \ref{cor:DLR}] Let $\alpha_n = 1/n^\eta$ $(\eta \in [1/2,1))$ and $(\beta_{1n})_{n \in \nat}$ satisfies $\beta_{1n} \leq \beta_{1,n-1}$ and $\alpha_n(1-\beta_{1n}) \leq \alpha_{n-1}(1-\beta_{1,n-1})$ for all $n \in \nat$, and $\sum_{k=1}^\infty\beta_{1k}<\infty$. First, we obviously have
\begin{align}~\label{eq:temp2}
\lim_{n \to \infty}\frac{1}{n}\sum_{k=1}^n\beta_{1k} \leq \lim_{n \to \infty}\frac{B_1}{n} = 0,
\end{align}
where $B_1 := \sum_{k=1}^\infty\beta_{1k} < \infty$. We have that
\begin{align*}
\lim_{n \to \infty}\frac{1}{n\alpha_n}=\lim_{n \to \infty}\frac{1}{n^{1-\eta}}=0.
\end{align*}
Furthermore, we have
\begin{align}~\label{eq:order1}
\frac{1}{n}\sum_{k=1}^n\alpha_k &\leq \frac{1}{n}\left(1 + \int_1^n\frac{dt}{t^\eta}\right) \leq \frac{1}{1-\eta}\frac{1}{n^{1-\eta}}.
\end{align}
This, together with \eqref{eq:main}, \eqref{eq:temp2}, and \eqref{eq:order1}, proves the assertion of Corollary \ref{cor:DLR}.
\end{proof}

\bibliographystyle{abbrv}
\bibliography{biblib}

\begin{thebibliography}{10}

\bibitem{absil2008}
P.-A. Absil, R.~Mahony, and R.~Sepulchre.
\newblock {\em Optimization algorithms on matrix manifolds}.
\newblock Princeton University Press, 2008.

\bibitem{becigneul2018riemannian}
G.~B{\'e}cigneul and O.-E. Ganea.
\newblock Riemannian adaptive optimization methods.
\newblock {\em Proceedings of The International Conference on Learning
  Representations}, 2019.

\bibitem{bonnabel2013stochastic}
S.~Bonnabel.
\newblock Stochastic gradient descent on {R}iemannian manifolds.
\newblock {\em IEEE Transactions on Automatic Control}, 58(9):2217--2229, 2013.

\bibitem{duchi2011adaptive}
J.~Duchi, E.~Hazan, and Y.~Singer.
\newblock Adaptive subgradient methods for online learning and stochastic
  optimization.
\newblock {\em Journal of machine learning research}, pages 2121--2159, 2011.

\bibitem{ganea2018hyperbolic}
O.-E. Ganea, G.~B{\'e}cigneul, and T.~Hofmann.
\newblock Hyperbolic neural networks.
\newblock In {\em Advances in neural information processing systems}, pages
  5345--5355, 2018.

\bibitem{iiduka2020appropriate}
H.~Iiduka.
\newblock Appropriate learning rates of adaptive learning rate optimization
  algorithms for training deep neural networks.
\newblock {\em arXiv preprint arXiv:2002.09647}, 2020.

\bibitem{iosifidis2015graph}
A.~Iosifidis, A.~Tefas, and I.~Pitas.
\newblock Graph embedded extreme learning machine.
\newblock {\em IEEE Transactions on Cybernetics}, 46(1):311--324, 2015.

\bibitem{kasai2019riemannian}
H.~Kasai, P.~Jawanpuria, and B.~Mishra.
\newblock Riemannian adaptive stochastic gradient algorithms on matrix
  manifolds.
\newblock In {\em International Conference on Machine Learning}, pages
  3262--3271, 2019.

\bibitem{kingma2015adam}
D.~P. Kingma and J.~Ba.
\newblock {A}dam: {A} method for stochastic optimization.
\newblock {\em Proceedings of The International Conference on Learning
  Representations}, pages 1--15, 2015.

\bibitem{lecun1998mnist}
Y.~LeCun, C.~Cortes, and C.~J. Burges.
\newblock The mnist database.
\newblock {\em URL http://yann.lecun.com/exdb/mnist}, 1998.

\bibitem{chong2010iterative}
C.~Li, G.~L{\'o}pez, and V.~Martquez.
\newblock Iterative algorithms for nonexpansive mappings on {H}adamard
  manifolds.
\newblock {\em Taiwanese Journal of Mathematics}, 14(2):541--559, 2010.

\bibitem{mao2016novel}
S.~Mao, L.~Xiong, L.~Jiao, T.~Feng, and S.-K. Yeung.
\newblock A novel {R}iemannian metric based on {R}iemannian structure and
  scaling information for fixed low-rank matrix completion.
\newblock {\em IEEE Transactions on Cybernetics}, 47(5):1299--1312, 2016.

\bibitem{nickel2017poincare}
M.~Nickel and D.~Kiela.
\newblock Poincar{\'e} embeddings for learning hierarchical representations.
\newblock In {\em Advances in neural information processing systems}, pages
  6338--6347, 2017.

\bibitem{reddi2018convergence}
S.~J. Reddi, S.~Kale, and S.~Kumar.
\newblock On the convergence of {A}dam and beyond.
\newblock {\em Proceedings of The International Conference on Learning
  Representations}, pages 1--23, 2018.

\bibitem{sakai1996riemannian}
T.~Sakai.
\newblock {\em Riemannian geometry}, volume 149.
\newblock American Mathematical Society, 1996.

\bibitem{sato2019riemannian}
H.~Sato, H.~Kasai, and B.~Mishra.
\newblock Riemannian stochastic variance reduced gradient algorithm with
  retraction and vector transport.
\newblock {\em SIAM Journal on Optimization}, 29(2):1444--1472, 2019.

\bibitem{shen2018deep}
X.~Shen and F.-L. Chung.
\newblock Deep network embedding for graph representation learning in signed
  networks.
\newblock {\em IEEE Transactions on Cybernetics}, 2018.

\bibitem{ungar2008gyrovector}
A.~A. Ungar.
\newblock A gyrovector space approach to hyperbolic geometry.
\newblock {\em Synthesis Lectures on Mathematics and Statistics}, 1(1):1--194,
  2008.

\bibitem{walter1974metric}
R.~Walter.
\newblock On the metric projection onto convex sets in riemannian spaces.
\newblock {\em Archiv der Mathematik}, 25(1):91--98, 1974.

\bibitem{zeiler2012adadelta}
M.~D. Zeiler.
\newblock Adadelta: an adaptive learning rate method.
\newblock {\em arXiv preprint arXiv:1212.5701}, 2012.

\bibitem{zhang2016first}
H.~Zhang and S.~Sra.
\newblock First-order methods for geodesically convex optimization.
\newblock In {\em Conference on Learning Theory}, pages 1617--1638, 2016.

\bibitem{zhou2019faster}
P.~Zhou, X.~Yuan, S.~Yan, and J.~Feng.
\newblock Faster first-order methods for stochastic non-convex optimization on
  {R}iemannian manifolds.
\newblock {\em IEEE transactions on Pattern Analysis and Machine Intelligence},
  2019.

\end{thebibliography}

\end{document}